\newtheorem{proposition}{Proposition}[section]
\newtheorem{theorem}[proposition]{Theorem}
\newtheorem{lemma}[proposition]{Lemma}
\newtheorem{corollary}[proposition]{Corollary}
\newtheorem{definition}[proposition]{{Definition}}
\newenvironment{defn}{\begin{definition} \rm}{\end{definition}}
\newtheorem{remark}[proposition]{{Remark}}
\newtheorem{Question}[proposition]{Question}
\newtheorem{Examples}[proposition]{Examples}
\newcommand{\cA}{{\mathcal A}}
\newcommand{\cB}{{\mathcal B}}
\newcommand{\cX}{{\mathcal X}}
\newcommand{\cP}{{\mathcal P}}
\newcommand{\cQ}{{Q}}
\newcommand{\cM}{{\mathcal M}}
\newcommand{\cH}{{\mathcal H}}
\newcommand{\cT}{{\mathcal T}}
\newcommand{\cE}{{\mathcal E}}
\newcommand{\cC}{{\mathcal C}}
\newcommand{\cS}{{\mathcal S}}
\renewcommand{\aa}
\newcommand{\ed}{\textcolor{blue}}
\newcommand{\Hom}{\operatorname{Hom}\nolimits}
\newcommand{\soc}{\operatorname{soc}\nolimits}
\newcommand{\rad}{\operatorname{rad}\nolimits}
\renewcommand{\dim}{\operatorname{dim}\nolimits}
\newcommand{\bc}{Brauer configuration}
\newcommand{\bca}{\bc\ algebra}
\definecolor{candyapplered}{rgb}{1.0, 0.03, 0.0}
\def\thm@space@setup{%
  \thm@preskip=0.7cm \thm@postskip=0.3cm
}
\begin{document}

\nocite{*}

\date{today}
\title[Almost gentle algebras and trivial extensions]
{Almost gentle algebras and their trivial extensions}

\author[Green]{Edward L.\ Green}
\address{Edward L.\ Green, Department of
Mathematics\\ Virginia Tech\\ Blacksburg, VA 24061\\
USA}
\email{green@math.vt.edu}
\author[Schroll]{Sibylle Schroll}
\address{Sibylle Schroll\\
Department of Mathematics \\
University of Leicester \\
University Road \\
Leicester LE1 7RH \\
United Kingdom}
\email{schroll@le.ac.uk }

\subjclass[2010]{16G20, 
}
\keywords{ gentle algebra, special biserial algebra,  symmetric special multiserial algebra, Brauer configuration algebra}
\thanks{This work was supported through the Engineering and Physical Sciences Research Council, grant numbers EP/K026364/1 and EP/P016294/1}

\begin{abstract}
In this paper we define almost gentle algebras. They are monomial  special multiserial algebras generalizing gentle algebras. We show that the trivial extension of an almost gentle algebra by its minimal injective co-generator is a symmetric special multiserial algebra and hence a Brauer configuration algebra. Conversely, we show that  any almost gentle algebra is an admissible cut of a unique Brauer configuration algebra and as a consequence, we obtain that every Brauer configuration algebra with multiplicity function identically one, is the trivial extension of an almost gentle algebra. We show that to every almost gentle algebra $A$ is associated a hypergraph, and that this hypergraph induces  the Brauer configuration of the trivial extension of $A$.  Amongst other things, this gives a combinatorial criterion to decide when two almost gentle algebras have isomorphic trivial extensions.
\end{abstract}
\date{\today}
\maketitle

\setcounter{tocdepth}{1}

\section{Introduction}

In this paper we introduce a new class of multiserial algebras called almost gentle algebras. These algebras are monomial quadratic algebras which generalise gentle algebras. Namely, an algebra $KQ/I$ is almost gentle if it is special multiserial and if $I$ is generated by paths of length 2. It is clear from the definition that every gentle algebra is almost gentle. While gentle algebras are of tame representation type, almost gentle algebras are wild in general. However, there are many examples of almost gentle algebras of finite and tame representation type  that are not gentle. 

Gentle algebras are one of the classes of most studied algebras 
as they appear in many different contexts such as Jacobian algebras of unpunctured surfaces in cluster theory \cite{ABCP, Labardini}, algebras associated to dimer models \cite{Bocklandt1, Bocklandt2} or in the context of the study of the enveloping algebra of Lie algebra \cite{Khovanov}. Their representation theory comes with a strong combinatorial structure. They are string algebras and as such their indecomposable modules are given by string and band modules, and their Auslander-Reiten quiver is completly determined by the string combinatorics \cite{BR}. Maps between string and band modules have been given in \cite{CB} and \cite{Kr}, respectively. It is a class of algebras closed under derived equivalence \cite{SZ} and they are derived tame. The   indecomposable objects in the derived category of a gentle algebra have been determined in \cite{BM}, they are given by homotopy strings and bands. In \cite{ALP} the maps between homotopy strings and bands have been explicitly described. The singularity category of a gentle algebra has been described in \cite{Ka}. 
 Recently, in \cite{CS, CPS} a basis of the extensions between string and band modules has been given. 

Almost gentle algebras do not have the underlying string combinatorics that gentle algebras have. However, the strong similarity  in their structure makes this an interesting new class of algebras to consider. It contains many examples of well-studied algebras, such as hereditary algebras arising from many orientations of all Dynkin and extended Dynkin quivers. 

Just as gentle algebras, almost gentle algebras can be of finite or infinite global dimension. They are of infinite global dimension if and only if the quiver contains an oriented cycle in which every subpaths of lengths two is a relation. While gentle algebras are Gorenstein \cite{GR}, this is not necessarily true for almost gentle algebras, see the example in Section 2.

In Sections 2 and 3, we give a closed formula for the dimensions of almost gentle algebras and their trivial extensions in terms of maximal paths in the almost gentle algebras.

 Examples of trivial extensions of almost gentle algebras appear, for example, in the derived equivalence classification of symmetric algebras of finite and tame representation type, see \cite{Skowronski} and the references within. In the case of gentle algebras, there is   a characterisation  through their trivial extensions by their minimal injective co-generator. That is, an algebra $A$ is gentle if and only if the trivial extension $T(A) = A \ltimes \Hom_k(A,k) $ is special biserial \cite{PS}, see also \cite{R}.  In Section 4 we show that the trivial extension of an almost gentle algebra is special multiserial.  
 We note, however, that the converse  remains an open question. That is it is not know that if the trivial extension of an algebra $A$  is special multiserial whether this implies that $A$ is almost gentle.

Another class of examples of algebras arising as trivial extensions of almost gentle algebras is given by symmetric algebras with radical cubed zero which have been extensively studied, see for example \cite{B, ESch, ESo} and \cite{GS2}. It follows from the results in this paper and in \cite{GS2} that an algebra is a  symmetric algebra with radical cubed zero  if and only if it is  a trivial extension of an almost gentle algebra where the paths in the quiver of the almost gentle algebra are all of length
at most one.

In Section 5 we show that an admissible cut, as defined in \cite{S}, and based on the definition of admissible cuts in \cite{F} and \cite{FP} (see Section 5 for the definition), of a symmetric special biserial algebra  gives rise to an almost gentle algebra. In the other direction we show that every symmetric special multiserial algebra with no powers in the relations or equivalently that every Brauer configuration algebra with multiplicity function equal to one, is the trivial extension of an almost gentle algebra (see \cite{GS1} for the definition of Brauer configuration algebras). We note that this almost gentle algebra is not unique. In fact, our construction gives a whole family of almost gentle algebras that have isomorphic trivial extensions. While all of these gentle algebras have the same number of simple modules, they can have very different homological properties. For example, some might have finite global dimension while others might have infinite global dimension. Furthermore, it is straightforward to see that these algebras are not derived equivalent in general.   
We leave it as an open question to the reader to determine the relationship between all the gentle algebras that have the same trivial extension. 

 In Section 6, we give a construction of the Brauer configuration of the trivial extension of an almost gentle algebra. The construction is based on the notion of an algebra defined by cycles. 

 A Brauer configuration is a vertex decorated hypergraph with an orientation. Based on this observation, we associate, in Section 7, a decorated hypergraph with orientation to every almost gentle algebra and show that this hypergraph is precisely the Brauer configuration of the trivial extension of the almost gentle algebra. That  is, in the terminology of Section 6, it is exactly the Brauer configuration of the algebra defined by cycles isomorphic to the trivial extension of the  almost gentle algebra.  It follows that two almost gentle algebras have the same trivial extensions if and only if they have the same associated hypergraph.




\section{Almost gentle algebras}

In this section we define almost gentle algebras, generalizing the class of gentle algebras. 

First we fix some notation. Let $K$ be a field. All algebras are assumed to be indecomposable $K$-algebras. Unless otherwise stated, an algebra given by quiver and relations, $KQ/I$ is assumed to be finite dimensional and the ideal $I$ is assumed to be admissible. For a quiver $Q$, we denote by $Q_0$ the set of vertices in $Q$ and by $Q_1$, the set of arrows in $Q$. We set $e_v$ to
be the trivial path at a vertex $v \in Q_0$. Furthermore, for $a, b \in Q_1$, we write $ab$ for the path $a$ followed by $b$. We let $s(a)$ be the vertex at which  the arrow $a$ starts and let $t(a)$ be the vertex at which $a$ ends. For a path $p = a_1...a_n$ in $Q$, we set $s(p) = s(a_1)$ and $t(p) = t(a_n)$. Given a finite dimensional algebra $\Lambda$, let $\Lambda^e \simeq \Lambda \otimes_k \Lambda^{op}$. 

 An algebra $A$ is {\it gentle} if it is Morita equivalent to an algebra $KQ/I$ such that 
\begin{itemize}
\item[(S0)] $I$ is generated by paths of lenghts 2,
 \item[(S1)] for every arrow $a \in Q_1$, there exists at most one arrow $b$ such that $ab \notin I$ and at most one arrow $c$ such that $ca \notin I$, 
\item[(S2)]  for every arrow $a \in Q_1$, there exists at most one arrow $b$ such that $ab \in I$ and at most one arrow $c$ such that $ca \in I$,
\item[(S3)] for every vertex $v \in Q_0$ there are at most two arrows ending at $v$ and at most two arrows starting at $v$.
\end{itemize}


Recall from \cite{GS2} that an algebra is \emph{special multiserial} if it is Morita equivalent to an algebra $KQ /I$ satisfying  condition (S1).

\begin{defn}
We say that an algebra is \emph{almost gentle} if it is Morita equivalent to an algebra $KQ/I$ such that 
\begin{itemize}
\item[(S0)] $I$ is generated by paths of lengths 2,
 \item[(S1)] for every arrow $a \in Q_1$, there exists at most one arrow $b$ such that $ab \notin I$ and at most one arrow $c$ such that $ca \notin I$.
\end{itemize}
\end{defn}
 
So an algebra is almost gentle if  
  it is Morita equivalent to a special multiserial algebra $KQ/I$ where $I$ is generated by monomial relations of length two.

\begin{remark} {\rm Every gentle algebra is almost gentle. }\end{remark}

We state some basic facts about almost gentle algebras.
An almost gentle algebra $KQ/I$  is of infinite global dimension if there is an oriented cycle in $Q$ such that every path of length two in that cycle is in $I$. If no such cycle exists then $KQ/I$ is of finite global dimension.  Since the ideal $I$ can be generated by paths of length 2,  every almost
gentle algebra is a Koszul algebra. 
The only almost gentle algebras that are self-injective are $K[x]/(x^2)$ and the oriented cycle with
all paths of length 2 being relations. 

 Gentle algebras are Gorenstein \cite{GR}. The same does not hold for almost gentle algebra. Consider, for example, the algebra with quiver 
$\xymatrix{
\bullet\ar@/^/[r]_a&\bullet\ar@/^/[l]^b\ar[r]^c&\bullet
}$ and where the ideal of relations is generated by all paths of length 2, then the resulting algebra is almost gentle
but not Gorenstein.

In the following lemmas we collect some obvious properties of almost gentle algebras. 

\begin{lemma}\label{oriented cycle path in I} Let $A = KQ/I$ be an almost gentle algebra and let $C$ be an oriented cycle in $Q$. Then there exists a path of length two in $C$ that is in $I$. 
\end{lemma}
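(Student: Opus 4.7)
The plan is a short proof by contradiction using the admissibility of $I$ together with condition (S0). Write $C = a_1 a_2 \cdots a_n$ with $t(a_i) = s(a_{i+1})$ for indices read modulo $n$, and suppose for contradiction that $a_i a_{i+1} \notin I$ for every $i$.

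First I would invoke admissibility of $I$: there is an integer $N$ with $J^N \subseteq I$, where $J$ denotes the arrow ideal of $KQ$. In particular, the path $C^N = (a_1 a_2 \cdots a_n)^N$, which has length $nN \geq N$, lies in $I$.

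Next I would use condition (S0), which says that $I$ is a monomial ideal generated by paths of length $2$. A standard property of such monomial ideals is that a single path $p$ lies in $I$ if and only if some length-$2$ subpath of $p$ is itself a generator, and hence lies in $I$. Applying this with $p = C^N$, some length-$2$ subpath of $C^N$ must belong to $I$; but every length-$2$ subpath of $C^N$ has the form $a_i a_{i+1}$ for some $i$ (mod $n$), contradicting our standing assumption. This contradiction forces some $a_i a_{i+1}$ to lie in $I$, proving the lemma.

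The only mildly delicate point is the observation that a monomial ideal generated by paths of length $2$ contains a path $p$ only when $p$ factors through one of the length-$2$ generators; I would either cite this standard fact or supply a one-line justification using the usual path basis of $KQ$. Notice also that condition (S1) is not actually needed: the lemma holds for any admissible monomial ideal generated in degree $2$.
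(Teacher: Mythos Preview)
Your argument is correct. The paper does not actually supply a proof of this lemma: it is listed among several ``obvious properties of almost gentle algebras'' and left unproved, so your write-up is precisely the kind of justification the authors had in mind. Your closing observation that only (S0) and admissibility (equivalently, finite dimensionality) are used, not (S1), is also accurate.
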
 

Let $p$ be a path in $Q$.
 Then we say that $p$ is a \emph{maximal path of $A=KQ/I$} if $p \notin I$ and,
for every arrow $a$ in $Q$ we have $ap \in I$ and $pa \in I$. We denote the set
of maximal paths of $A$ by $\cM$.

\begin{lemma}\label{unique maximal path} Let $A = KQ/I$ be an almost gentle algebra and let $v$ be a
 vertex in $Q$. Then $v$ lies in a unique maximal path of A if and only if one of the following conditions holds

(i) $v$ is a sink with a unique arrow ending at $v$, 

(ii) $v$ is a source with a unique arrow starting at $v$, 

(iii) there is a unique arrow $a$ ending at $v$ and there is a unique arrow $b$ starting at $v$ and $ab \notin I$.
\end{lemma}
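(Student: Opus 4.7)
My plan is to localize the question at $v$ and directly count the number of maximal paths containing $v$. The preliminary observation is that condition (S1), combined with the fact that $A$ is finite-dimensional (so that sufficiently long paths lie in $I$), implies that every arrow $a$ belongs to a unique maximal path: one extends $a$ one arrow at a time on each side, with (S1) providing at most one legal choice at every step, so the extension is forced and must terminate.

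With this uniqueness in hand I would parametrise the maximal paths through $v$ by their local shape at $v$. Write $\mathrm{in}(v)$ and $\mathrm{out}(v)$ for the arrows ending and starting at $v$, set $m = |\mathrm{in}(v)|$, $n = |\mathrm{out}(v)|$, and let $p$ count the pairs $(a,b) \in \mathrm{in}(v) \times \mathrm{out}(v)$ with $ab \notin I$. Condition (S1) makes these pairs a partial matching, so $p \leq \min(m,n)$. I then split the maximal paths through $v$ into three families: those in which $v$ is an internal vertex (parametrised by the $p$ pairs, via the arrow-uniqueness statement applied to $ab$); those terminating at $v$ (parametrised by the $m - p$ incoming arrows not appearing in any pair, via backward extension); and those starting at $v$ (parametrised by the $n - p$ outgoing arrows not appearing in any pair, via forward extension). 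The three families are pairwise disjoint, since they are distinguished by whether $v$ is an interior, terminal, or initial vertex of the path, and together they exhaust all maximal paths through $v$.

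This gives a count of $m + n - p$ maximal paths through $v$, so the uniqueness hypothesis is equivalent to the Diophantine condition $m + n - p = 1$ subject to $0 \leq p \leq \min(m,n)$. A short case analysis yields the only solutions $(m,n,p) \in \{(1,0,0),(0,1,0),(1,1,1)\}$, which are precisely conditions (i), (ii), and (iii). The main point requiring care is the disjointness and exhaustiveness of the three families of maximal paths through $v$, together with the correct cardinality of each via the arrow-uniqueness principle; once the count $m + n - p$ is established, the remaining arithmetic is routine.
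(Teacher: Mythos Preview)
The paper does not give a proof of this lemma; it is listed among several ``obvious properties'' of almost gentle algebras and left to the reader. So there is no paper argument to compare against, only the correctness of your own.

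Your counting argument is clean and correctly shows that $m+n-p$ equals the number of \emph{occurrences} of $v$ across all maximal paths, and that $m+n-p=1$ is equivalent to one of (i)--(iii). The gap is the sentence ``the three families are pairwise disjoint, since they are distinguished by whether $v$ is an interior, terminal, or initial vertex of the path.'' A single maximal path can visit $v$ more than once, so the same path may appear in several of your families, and distinct parameters (pairs, or unmatched arrows) can point to the same path. Concretely, in $A=K[x]/(x^2)$ the loop $a$ is the only maximal path, yet $m=n=1$, $p=0$, so $m+n-p=2$; the path $a$ is counted once as ``starting at $v$'' and once as ``terminating at $v$''. For a non-loop example, take $Q$ with vertices $v,w$, arrows $a\colon v\to w$ and $b\colon w\to v$, and $I=(ba)$: the unique maximal path is $ab$, which both begins and ends at $v$, and again $m+n-p=2$ while none of (i)--(iii) hold.

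This is not just a technical slip in your write-up: these examples are genuine counterexamples to the ``only if'' direction of the lemma as literally stated. What your method actually proves is the correct refinement: $v$ has \emph{exactly one occurrence} in the paths of $\cM$ if and only if one of (i)--(iii) holds. This occurrence count is in fact the quantity the paper needs later (the hyperedges in Section~7 are multisets recording occurrences, not just the set of maximal paths through $v$). So your approach is sound and isolates the right invariant; the fix is to phrase the argument in terms of occurrences rather than paths, and to note that the lemma itself requires this reading.
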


\begin{lemma}Let $A=KQ/I$ be an almost gentle algebra. Then 
 
 (i) Every arrow $a \in Q_1$ lies in exactly one maximal path of  A. 
 
 (ii) Let $m\in\cM$. Then $m$ has no repeated arrows.
\end{lemma}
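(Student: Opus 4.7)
The plan for (i) is to construct a maximal path through a given arrow $a \in Q_1$ by extending $a$ greedily to the right and to the left, using condition (S1) to make each step canonical. Concretely, I would set $a_1 = a$ and, whenever an arrow $b$ exists with $a_i b \notin I$, let $a_{i+1} = b$; by (S1) this choice is forced. The process stops as soon as no such $b$ exists. I would perform the analogous procedure on the left.

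The main obstacle is showing that this extension terminates, so that the object produced is a genuine finite path. Here I would invoke Lemma \ref{oriented cycle path in I}: if the right-extension failed to terminate, then by finiteness of $Q_1$ some arrow would repeat, and picking the smallest pair $i<j$ with $a_i=a_j$ would exhibit $a_i a_{i+1}\cdots a_{j-1}$ as an oriented cycle in $Q$ (a loop in the degenerate case $j=i+1$). By construction, no length-$2$ subpath $a_k a_{k+1}$ of this cycle lies in $I$, contradicting Lemma \ref{oriented cycle path in I}. The left-extension terminates by the same reasoning. Concatenating gives a path $p$ through $a$ that cannot be prolonged on either side. Since $I$ is monomial and generated in degree two, a path lies in $I$ if and only if it contains one of these length-$2$ generators as a subpath; by construction this does not happen for $p$, so $p\notin I$ and therefore $p \in \cM$.

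For uniqueness in (i), I would note that in any maximal path $m = c_1c_2\cdots c_k$, each length-two subpath $c_s c_{s+1}$ must lie outside $I$, as otherwise $m$ itself would be in $I$. So if $c_i = a$ then (S1) forces $c_{i+1}$ to coincide with the arrow chosen in the right-extension above, and inductively the right portion of $m$ agrees with that of $p$; the left portion matches for the same reason. Hence $m=p$, which proves both existence and uniqueness.

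For (ii), suppose $m = c_1\cdots c_k \in \cM$ had a repetition $c_i = c_j$ with $i<j$. Then $c_i c_{i+1}\cdots c_{j-1}$ ends at $s(c_j)=s(c_i)$ and is therefore an oriented cycle in $Q$ (a loop when $j=i+1$). Its length-$2$ subpaths are all subpaths of $m$ and so none lies in $I$, again contradicting Lemma \ref{oriented cycle path in I}. The whole argument thus reduces to applying Lemma \ref{oriented cycle path in I} to rule out cyclic behavior, first in the extension process used to prove (i), and then directly within a maximal path to establish (ii).
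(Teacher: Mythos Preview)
Your proof is correct. The paper states this lemma without a formal proof (it is listed among ``obvious properties''), but the discussion immediately following it---defining the functions $\sigma,\tau$ and observing that finite-dimensionality of $A$ forces $\sigma^{M_a}(a)=\diamond$ and $\tau^{N_a}(a)=\diamond$ for some $M_a,N_a$---is exactly your greedy left/right extension, and the resulting path $\tau^{N_a-1}(a)\cdots a\cdots\sigma^{M_a-1}(a)$ is your $p$. The one small difference is the termination argument: the paper appeals directly to finite-dimensionality, while you route through Lemma~\ref{oriented cycle path in I} (itself a consequence of finite-dimensionality). Both are fine; your version simply makes the combinatorics explicit, and the same cycle argument you use for termination in (i) is then reused verbatim for (ii).
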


We introduce two functions associated to an almost gentle algebra $A$ 
which will
be used later in the paper.  
Let $\diamond$ be some element not in $Q_1$ and 
set $\cA=Q_1 \cup \{\diamond\}$.  Define
$\sigma\colon Q_1\to \cA$ and $\tau\colon Q_1\to \cA$ by
$\sigma(a)=\begin{cases} b& \text{ if } ab\notin I\\
                \diamond &\text{ if }ab\in I \text{ for all }b\in Q_1\end{cases}
$ and
$\tau(a)=\begin{cases} c& \text{ if } ca\notin I\\
                \diamond &\text{ if }ca\notin I \text{ for all }c\in Q_1\end{cases}$
where $a,b,c\in Q_1$. From the definitioin of special multiserial, we see that
these functions are well-defined. Since $A$ is finite dimensional,  
 for every $a \in Q_1$, there are smallest nonnegative integers $M_a$ and $N_a$ such that 
$\sigma^{M_a}(a)=\diamond$ and $\tau^{N_a}(a)=\diamond$.  It follows
that the unique maximal path of $A$ containing the arrow $a$ is
$\tau^{N_a-1}(a)\tau^{N_a-2}(a)\cdots\tau(a)a\sigma(a)\cdots\sigma^{M_a-1}(a)$, which
is of length $M_a+N_a-1$.
Since a  maximal path of $A$ has no repeated arrows and since every arrow is in a unique
maximal path of $A$, it is easy to see that a maximal  path of $A$ is the unique maximal path 
of $A$ of any of its
arrows and the position at which that arrow occurs in the path is uniquely determined. 

If $A=KQ/I$ and $\pi\colon KQ\to A$ is the canonical surjection, then
for $x\in KQ$ we will denote $\pi(x)$ by $\bar x$.  If $a\in Q_1$, we let $U_a$ be the right $A$-module $\overline{a}A$ generated
by $\overline{a}$.   If $A$  is an almost gentle algebra, then the $U_a$ are uniserial
$A$-modules. Note that this holds more generally if $A$ is  a special multiserial algebra, see \cite{GS2}.

\begin{proposition}\label{radical-structure} Let $A=KQ/I$ be an almost gentle algebra. Then
$\rad(A) =\bigoplus_{a\in Q_1} U_a$.  A
$K$-basis for $A$ is the set of $\bar p$, where  $\bar p$  is a subpath of length $\ge 1$ of a maximal path
of $A$,    together with the trivial subpaths $\overline{e_v}$, for $v\in Q_0$.
\end{proposition}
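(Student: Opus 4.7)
The plan is to deduce both statements from a single identification of a $K$-basis for $A$. Since $I$ is generated by monomial relations, a standard fact gives that $A = KQ/I$ has a $K$-basis consisting of the images $\bar p$ of those paths $p$ in $Q$ (including the trivial paths $e_v$) that do not lie in $I$. A path $p = a_1 a_2 \cdots a_n$ of length $n \geq 1$ lies in $I$ if and only if some length-two subpath $a_i a_{i+1}$ lies in $I$. By the definition of $\sigma$, the condition $a_i a_{i+1} \notin I$ is equivalent to $a_{i+1} = \sigma(a_i)$ (and in particular $\sigma(a_i) \neq \diamond$). Hence $\bar p \neq 0$ exactly when the arrow sequence has the form $a_1, \sigma(a_1), \ldots, \sigma^{n-1}(a_1)$, which by the formula derived earlier for the unique maximal path through $a_1$ is precisely the condition that $p$ is a subpath of a maximal path of $A$. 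Together with the trivial paths $\overline{e_v}$ this yields the claimed basis.

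For the radical identity, note that since $I$ is admissible, $\rad(A)$ is spanned by images of paths of length $\geq 1$, so the $K$-basis of $\rad(A)$ is the subset of the basis above consisting of nontrivial-path images. Any such $\bar p$ with $p = a_1 \cdots a_n$ satisfies $\bar p = \bar{a_1} \cdot \overline{a_2 \cdots a_n} \in \bar{a_1}\,A = U_{a_1}$, so $\rad(A) = \sum_{a \in Q_1} U_a$. To upgrade this to a direct sum I would compute each $U_a$ explicitly: by the basis of $A$ above together with (S1), the paths $q$ with $\bar a\,\bar q \neq 0$ are precisely the initial subpaths of $\sigma(a)\sigma^2(a)\cdots \sigma^{M_a - 1}(a)$, so $U_a$ has $K$-basis
\[
\{\,\bar a,\ \overline{a\,\sigma(a)},\ \ldots,\ \overline{a\,\sigma(a)\cdots \sigma^{M_a - 1}(a)}\,\}.
\]
Each such basis vector is a distinct element of the basis of $A$ already identified, and its first arrow is $a$; hence for distinct arrows $a \neq a'$ the bases of $U_a$ and $U_{a'}$ are disjoint. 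Their union is therefore the full basis of $\rad(A)$, proving $\rad(A) = \bigoplus_{a \in Q_1} U_a$.

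The argument is essentially bookkeeping once the monomial-algebra basis fact is in hand. The only point requiring care is the explicit identification of the $K$-basis of $U_a$, and the crucial input there is (S1): it guarantees that $\sigma$ is single-valued, so the successive nonzero right-multiples of $\bar a$ form a single chain of subpaths of one maximal path rather than a branching tree. This is what makes $U_a$ uniserial and produces bases for the $U_a$ that are compatible with, and jointly exhaust, the nontrivial part of the basis of $A$.
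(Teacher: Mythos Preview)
Your proof is correct and follows essentially the same line as the paper's: both start from the standard monomial-algebra basis $\{\bar p\mid p\notin I\}$, observe that each nontrivial $\bar p$ lies in $U_{a_1}$ where $a_1$ is the first arrow of $p$, and then argue directness from the fact that distinct $U_a$ are spanned by disjoint subsets of this basis. The paper's version is terser---it simply invokes ``$A$ is a monomial algebra'' both for the basis claim and for the direct-sum step---whereas you spell out the identification of nonzero paths with subpaths of maximal paths via the function $\sigma$ and explicitly list a basis of each $U_a$; this makes the role of condition (S1) visible but does not change the underlying argument.
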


\begin{proof}
Since $A$ is a  monomial algebra, $A$ has a $K$-basis  $\{ \bar p \mid
p \text{ is a path  in $Q$ and }p\notin I\}$.  Any such $p$ either has length $\ge 1$ or $p=e_v$ for some
vertex $v$.  This proves the basis part of the result.  If $p=a_1\cdots a_n$ is a path in $Q$ then
$\bar p\in U_{a_1}$.  It also follows that
$\sum_{a\in Q_1} U_{a}=\rad(A)$ since the image of the arrows generate $\rad(A)$.  Using
that $A$ is a monomial algebra, we see $\bigoplus_{a\in Q_1} U_{a}=\rad(A)$.
\end{proof}

 If $p$  is a  path in $Q$, we let $\ell(p)$ denote the length of $p$.

\begin{corollary}\label{dimension result}Let $A$ be  an almost
  gentle algebra.
Then
\[ \dim_K(A)=\mid Q_0 \mid + \sum_{m \in\cM}
\ell(U_{m})(\ell(U_{m})+1)/2,\]
where $U_m$ is the right uniserial $A$-module generated by the
first arrow in $m$.
\end{corollary}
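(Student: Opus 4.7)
The plan is to read off the dimension directly from the $K$-basis produced by Proposition \ref{radical-structure} and then group the basis elements according to which maximal path they belong to.

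First I would separate the $K$-basis into two parts: the $|Q_0|$ trivial paths $\overline{e_v}$ and the remaining elements $\overline{p}$ with $p$ a subpath of length $\ge 1$ of some maximal path. The trivial paths contribute exactly the $|Q_0|$ term. The task is then to count the nonzero classes $\overline{p}$ of paths of positive length without double counting.

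The key combinatorial step is to show that every path $p = a_1 a_2 \cdots a_n$ in $Q$ with $\overline{p} \neq 0$ is a subpath of a \emph{unique} maximal path of $A$. Since $\overline{p} \neq 0$, none of the length-two subpaths $a_i a_{i+1}$ lies in $I$, so $a_{i+1} = \sigma(a_i)$ for each $i$. By the lemma asserting that every arrow lies in exactly one maximal path of $A$, the arrow $a_1$ determines a unique maximal path $m = \tau^{N_{a_1}-1}(a_1)\cdots \tau(a_1)\, a_1 \,\sigma(a_1)\cdots \sigma^{M_{a_1}-1}(a_1)$, and the chain of identities $a_{i+1}=\sigma(a_i)$ forces $p$ to appear as a contiguous subpath of $m$. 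Uniqueness follows because any maximal path containing $p$ in particular contains $a_1$, and $a_1$ lies in only one maximal path. Moreover a maximal path has no repeated arrows, so the position of $p$ inside $m$ is itself unique; hence the assignment ``subpath of length $\ge 1$'' $\mapsto$ (maximal path, starting position) is a bijection.

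Next I would count, for a fixed maximal path $m$ of length $\ell(m)$ (number of arrows), the number of its subpaths of length $\ge 1$: there are $\ell(m)$ of length $1$, $\ell(m)-1$ of length $2$, and so on, giving $1+2+\cdots+\ell(m) = \ell(m)(\ell(m)+1)/2$ subpaths. Finally I would identify this with $\ell(U_m)(\ell(U_m)+1)/2$: if $a$ is the first arrow of $m$, the basis of $U_a = \overline{a}A$ is exactly $\{\overline{a},\overline{a\sigma(a)},\ldots,\overline{a\sigma(a)\cdots\sigma^{M_a-1}(a)}\}$ by Proposition \ref{radical-structure}, a uniserial module of length equal to $\ell(m)$. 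Summing over $m \in \cM$ and adding $|Q_0|$ yields the stated formula.

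The only delicate point is the uniqueness of the maximal path containing a given nonzero path, i.e.\ making sure that no class $\overline{p}$ is counted twice in the sum $\sum_{m\in\cM} \ell(U_m)(\ell(U_m)+1)/2$; this is where conditions (S0) and (S1) together with the almost gentle structure of $\sigma$ and $\tau$ are essential. Everything else is a bookkeeping argument on monomial bases.
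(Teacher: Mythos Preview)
Your proposal is correct and is exactly the argument the paper has in mind: the corollary is stated without proof, as an immediate consequence of Proposition~\ref{radical-structure}, and your write-up simply spells out the bookkeeping---partitioning the monomial basis into the $|Q_0|$ trivial paths and, for each $m\in\cM$, the $\ell(m)(\ell(m)+1)/2$ nontrivial subpaths of $m$, together with the identification $\ell(U_m)=\ell(m)$. The uniqueness step you flag (each nonzero path lies in a unique maximal path) is precisely what the paper records in the lemmas preceding the proposition, so nothing is missing.
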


\section{The symmetric special multiserial algebra associated to an almost gentle algebra}\label{symm spec
assoc to A}
  In \cite{GS3} given a special
multiserial algebra $A$, we constructed a
symmetric special multiserial algebra $A^*$  such that $A$ is a quotient 
of $A^*$. b Recall from \cite{GS2} that the class of symmetric special multiserial algebras and the class of Brauer configuration algebras coincide. We refer the reader to \cite{GS1} for the definition of a Brauer configuration algebra. We slightly modify that construction below
in the case of an almost gentle algebra.   
In this section, $A=KQ/I$ will denote an almost gentle algebra where $I$ is 
an ideal
generated by quadratic elements.  Recall that $\cM$ is the
set of maximal paths of $A$.  

We begin by defining a new quiver $Q^*$.  The vertices of $Q^*$ are the same
as $Q$. For each $m\in\cM$, let $a_m$ denote an arrow (not in $Q_1$) from the end 
vertex of $m$ to the start vertex of $m$.   The arrow set of $Q^*$ is $Q_1\cup \{a_m\mid m\in\cM\}$.
Since $Q$ is a subquiver of $Q^*$, we freely view paths in $Q$ as paths in $Q^*$.
For each $m\in\cM$, we obtain a cycle $C_m=ma_m$ in $Q^*$.   We let $\cS$ denote
the set cycles $C^*$ such that $C^*$ is a cyclic permutation of $C_m$ for some $m\in\cM$.
Let $\mu\colon \cS\to\mathbb Z_{>0}$ be defined by $\mu(C^*)=1$, for all $C^*\in\cS$.

We say a cycle in $Q^*$ is  \emph{simple} if the cycle has no repeated
arrows.
Following \cite{GS3}   we say a pair $(\cT,\nu)$
is a \emph{defining  pair in $Q$} if  $\cT$ is a set of simple cycles in $Q$
and $\nu\colon\cT\to \mathbb Z_{>0}$ which satisfy the following conditions:
\begin{enumerate}
\item[D0] If $C$ is a loop at a vertex $v$ and $C\in\cT$, then $\nu(C)> 1$.
\item[D1] If a simple cycle is in $\cT$, every cyclic permutation of the cycle is in $\cT$.
\item[D2] If $C\in\cT$ and $C'$ is a cyclic  permutation of $C$ then $\nu(C)=\nu(C')$.
\item[D3] Every arrow occurs in some simple cycle in $\cT$.
\item[D4]  If an arrow occurs in two cycles in  $ \cT$, the cycles are cyclic permutations
of each other.
\end{enumerate}

\begin{proposition}\label{defining pair} The pair $(\cS,\mu)$ defined above is
a defining pair.
\end{proposition}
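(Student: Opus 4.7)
The plan is to verify the five conditions D0--D4 for $(\cS,\mu)$ in turn, leaning on the lemmas already established about maximal paths.

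First, I would record that each $C_m = m\,a_m$ is a simple cycle in $Q^*$: by the lemma that a maximal path has no repeated arrows, $m$ itself has no repeated arrows, and $a_m\notin Q_1$ cannot appear in $m$, so $C_m$ has no repeated arrows. Any cyclic permutation of a simple cycle is again a simple cycle, so every member of $\cS$ is a simple cycle in $Q^*$.

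Next I would dispatch D1 and D2 immediately: D1 holds by the very definition of $\cS$ (it is built as the closure of $\{C_m : m\in\cM\}$ under cyclic permutation), and D2 holds because $\mu$ is constantly $1$.

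For D0, I would observe that since $I$ is admissible and is generated by paths of length $2$, no arrow lies in $I$; hence every maximal path $m\in\cM$ has length $\ell(m)\ge 1$, and consequently every $C_m$ has length $\ge 2$. Cyclic permutations preserve length, so no element of $\cS$ is a loop (a cycle of length $1$), and D0 is vacuous.

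Finally I would verify D3 and D4 together by analyzing each arrow of $Q^*_1 = Q_1\cup\{a_m : m\in\cM\}$. For an added arrow $a_m$, it occurs in $C_m\in\cS$ (giving D3), and it can occur only in cyclic permutations of $C_m$, since $a_m$ was introduced solely as part of $C_m$ (giving the D4 check for this case). For an arrow $a\in Q_1$, the lemma that every arrow of $Q$ lies in exactly one maximal path of $A$ furnishes a unique $m\in\cM$ with $a$ an arrow of $m$; then $a$ appears in $C_m$, yielding D3, and any cycle in $\cS$ containing $a$ must be a cyclic permutation of $C_m$ by uniqueness, yielding D4.

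The only delicate point is ensuring nothing collides under the two constructions: the added arrows $a_m$ are by definition disjoint from $Q_1$ and, being indexed by $m\in\cM$, remain distinct even when two maximal paths happen to share start and end vertices. Once this is observed, each of D0--D4 reduces to a one-line consequence of a lemma already in hand, so I do not anticipate any substantive obstacle.
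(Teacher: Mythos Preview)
Your proposal is correct and follows essentially the same approach as the paper: verify D0--D4 directly, using that maximal paths have no repeated arrows and that every arrow of $Q$ lies in a unique maximal path. Your treatment is in fact slightly tidier in two places---you argue D0 by observing $\ell(C_m)=\ell(m)+1\ge 2$ directly (the paper instead reasons about loops $a\in Q_1$ and the fact that $a^2\in I$), and you explicitly check D4 for the added arrows $a_m$, which the paper leaves implicit---but these are cosmetic differences, not a different route.
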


\begin{proof} Since for $m\in\cM$,  $m$ has no repeated arrows, the cycles $ma_m$ and
their cyclic  permutations are simple cycles. 
If $a$ is a loop in $Q$, and  hence in $Q^*$, then since
  $A$ is finite dimensional
and $I$ can be generated by  paths of length 2, we see that $a^2\in I$.  Let $m$ be the
unique maximal path in which $a$ occurs.  Then $a$ occurs in $ma_m$ which is not
a loop.  That is, $\cS$ contains no loops and hence D0 vacuously holds.  By construction
D1 holds.  Since $\mu \equiv 1$, D2 holds.  Since every arrow in $Q$ occurs in some
maximal  path $m$, every arrow in $Q$ occurs in some cycle $C^*\in\cS$.  Each
new arrow $a_m$ occurs in $ma_m\in\cS$ and we see that D3 holds.  Since
an arrow in $Q$ occurs in a unique maximal path in $Q$, D4 holds.

\end{proof}

Following \cite{GS3},   the defining pair  $(\cS,\mu)$  in $Q^*$ gives rise to a  $K$-algebra with 
quiver $Q^*$ and ideal of relations generated by all relations of the following three
types:
\begin{enumerate}
\item[Type 1] $C^{\mu(C)}-{C'}^{\mu(C')}$,  if $C$ and $C'$ are cycles in $\cS$ at some 
vertex $v\in Q_0$.
\item[Type 2]  $Ca$, if $C\in\cS$ and $a$ is the first arrow in $C$.
\item[Type 3]  $ab$, if $a,b\in Q_1$ and $ab$ does not lay on any $C\in\cS$.
\end{enumerate} 

 In \cite{GS3}  an algebra $KQ/I$ given by  a defining pair $(\cT, \nu)$ in $Q$ such that $I$ is generated  by all relations of Types 1, 2, and 3,  is called \emph{the algebra defined by cycles $(\cT,\nu)$}. 

The following results follows from \cite{GS3}. 

\begin{theorem} Let $A$ be an almost gentle algebra and let $A^*$ be the algebra defined by cycles $(\cS, \mu)$ as defined above. Then $A^*$ 
is a symmetric special multiserial algebra and thus it is a Brauer configuration algebra. 
\end{theorem}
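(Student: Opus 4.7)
The proof is essentially an application of the main construction theorem of \cite{GS3}, which asserts that whenever $(\cT,\nu)$ is a defining pair in a quiver, the associated algebra $KQ/I$ with $I$ generated by the Type 1, Type 2, and Type 3 relations is symmetric special multiserial. My plan is to verify that the construction performed in this section is a special case of that general construction, and then to invoke \cite{GS2} to pass from symmetric special multiserial to Brauer configuration algebra.

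The first ingredient is already in place: Proposition~\ref{defining pair} establishes that $(\cS,\mu)$ satisfies conditions D0--D4, so it is a defining pair in $Q^*$. The second observation is that $A^*$ has been defined precisely as the algebra defined by cycles $(\cS,\mu)$ in $Q^*$, with ideal generated by the three families of relations listed above. Together these place us squarely in the hypotheses of the theorem of \cite{GS3}, which then gives that $A^*$ is symmetric special multiserial. The second claim follows immediately from the result of \cite{GS2}, cited earlier in the excerpt, that the class of symmetric special multiserial algebras coincides with the class of Brauer configuration algebras.

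The main obstacle, to the extent there is one, is that the theorem of \cite{GS3} is being invoked as a black box, so the real work is confined to checking that $(\cS,\mu)$ fits the abstract hypotheses (done in Proposition~\ref{defining pair}) and that the ideal is of the required form (clear from the definition of $A^*$). A useful sanity check I would include is to verify (S1) directly for $A^*$: every $a\in Q_1$ can be followed only by its successor $\sigma(a)$ in the unique maximal path $m$ containing it, or, if $a$ is terminal in $m$, by the newly added arrow $a_m$; likewise each $a_m$ may only be followed by the first arrow of $m$, by the Type 2 relations. The analysis for predecessors is symmetric, using $\tau$. This confirms special multiseriality by hand, while the symmetric structure itself is then guaranteed by the Type 1 relations, which identify the distinct maximal nonzero cycles based at each common vertex and so impose the socle identifications characteristic of a symmetric algebra.
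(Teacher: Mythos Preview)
Your proposal is correct and matches the paper's approach exactly: the paper's entire proof is the single sentence ``The following results follows from \cite{GS3}'', relying on Proposition~\ref{defining pair} to place $(\cS,\mu)$ within the hypotheses of the cited construction, and on \cite{GS2} for the equivalence with Brauer configuration algebras. Your additional direct verification of (S1) is a reasonable sanity check (though the exclusion of wrong successors for $a_m$ comes from the Type~3 relations rather than Type~2), but it goes beyond what the paper records.
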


We call the  algebra $A^*=KQ^*/I^*$, above the \emph{symmetric special multiserial
algebra associated to $A$}.

The next result determines the dimension of $A^*$.

\begin{proposition}\label{dim of A*} Let $A$ be an almost gentle algebra and let
$A^*$ be the symmetric special multiserial algebra associated to $A$. Then
\[\dim_K(A^*)=2|Q_0| + \sum_{m\in \cM} \ell(m)\cdot (\ell(m) +1).\]  In
particular, $\dim_K(A^*)=2\dim_K(A)$.
\end{proposition}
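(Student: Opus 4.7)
The plan is to exhibit an explicit $K$-basis of $A^*=KQ^*/I^*$, count it, and then compare with Corollary~\ref{dimension result}.

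First I would determine which paths of $KQ^*$ remain nonzero in $A^*$. A Type~3 relation kills any length-$2$ product of arrows of $Q^*$ not appearing consecutively on some cycle $C^*\in\cS$, so every nonzero path of length $\ge 2$ must be (as an arrow sequence) a subpath of a cyclic permutation of some $C_m = m\, a_m$ with $m\in\cM$. Type~2 then caps the length of such a subpath at $\ell(C_m)=\ell(m)+1$. Because each arrow of $Q$ lies in a unique maximal path and each new arrow $a_m$ lies only in $C_m$, every arrow of $Q^*$ belongs to a unique $C_m$; hence subpaths coming from distinct $C_m$ are distinguishable as arrow sequences. Finally, Type~1 relations, simplified by $\mu\equiv 1$, identify any two full-length cycles at a common vertex.

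Based on this I would propose the basis: (i) the trivial idempotents $\overline{e_v}$, $v\in Q_0$; (ii) for each $m\in\cM$ and each length $1\le\ell\le\ell(m)$, one subpath of $C_m$ of length $\ell$ at each of the $\ell(m)+1$ cyclic starting positions; (iii) one chosen full-cycle representative at each vertex lying on some cycle of $\cS$. Counting yields $|Q_0|$ from (i), $\ell(m)(\ell(m)+1)$ per $m$ from (ii), and $|Q_0|$ from (iii) since under the standing hypothesis that $A$ is a nontrivial indecomposable algebra every vertex of $Q$ is adjacent to some arrow and hence lies on some $C_m$. The total is $2|Q_0| + \sum_{m\in\cM} \ell(m)(\ell(m)+1)$, which is the first formula of the proposition. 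For the second statement, I would note that the uniserial module $U_m$ generated by the first arrow of $m$ has length $\ell(m)$, so Corollary~\ref{dimension result} reads $\dim_K A = |Q_0| + \sum_m \ell(m)(\ell(m)+1)/2$; doubling reproduces the formula just obtained for $\dim_K A^*$.

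The main obstacle is verifying linear independence of the proposed basis modulo $I^*$. One must check that Type~1 relations act only on the full-cycle class at each vertex and produce no hidden identifications among proper subpaths of length $\le \ell(m)$, and that when a single maximal path $m$ revisits a vertex $v$, the several cyclic permutations of $C_m$ beginning at $v$ all collapse to a single full-cycle class so that (iii) contributes exactly one element per vertex and not more. Both points follow from the fact that Type~1 identifies cycles $C$ and $C'$ of $\cS$ whenever they start and end at a common vertex, regardless of whether they come from the same maximal path or different ones, after which the count is routine.
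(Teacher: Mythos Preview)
Your argument is correct and essentially amounts to the same count as the paper's, but the justification is organised differently. The paper does not build an explicit path basis. Instead it uses the layer decomposition of $A^*$ as a symmetric algebra: the $|Q_0|$ idempotents $e_v$ give the top; the fact (Theorem~\ref{defining pair} together with \cite{GS3}) that $A^*$ is symmetric forces $\dim_K\soc(A^*)=|Q_0|$; and for $\rad(A^*)/\soc(A^*)$ the paper uses that for each arrow $a$ on the cycle $ma_m$ the right ideal $aA^*$ is uniserial of length $\ell(m)+1$, so that $aA^*/(aA^*\cap\soc(A^*))$ has dimension $\ell(m)$, and there are $\ell(m)+1$ arrows on that cycle. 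Summing over $m$ gives $\sum_m \ell(m)(\ell(m)+1)$, and the three layers add to the stated formula.

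Your pieces (i), (ii), (iii) correspond exactly to these three layers, so the enumeration is the same; what differs is where the work goes. You argue linear independence directly from the presentation of $I^*$, whereas the paper outsources it to structural facts about symmetric special multiserial algebras (symmetry gives the socle dimension for free, and the uniseriality of $aA^*$ is part of the special multiserial package). Your route is more self-contained and avoids invoking Theorem~3.2 and the uniserial structure results; the paper's route is shorter because those results are already available. Your remark that every vertex lies on some $C_m$ (so that (iii) really contributes $|Q_0|$) is handled in the paper implicitly via the socle dimension of a symmetric algebra, which is perhaps the one place where the paper's argument is genuinely cleaner.
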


\begin{proof}
The quiver of $Q^*$ of
$A^*$ has $|Q_0|$ vertices, and so there are $|Q_0|$ paths of length $0$, the
$e_v$, for $v\in Q_0$. 
Since $A^*$ is a symmetric algebra, the socle of $A^*$ has dimension $|Q_0|$.
We now find the dimension of $\rad(A^*)/\soc(A^*)$.
Consider $m\in\cM$.  The cycle $ma_m\in \cS$ has length $\ell(m)+1$.
If $a$ is an arrow in $ma_m$ then $aA^*$ is a uniserial module of length $\ell(m)+1$.
Then $aA^*/(aA^*\cap \soc(A^*)$ has dimension $\ell(m)$ and there are 
$\ell(m)+1$ choices for $a$.  Therefore,  we see that 
$\dim_K(\rad(A^*)/\soc(A^*))=\sum_{m\in\cM}\ell(m)(\ell(m)+1)$.

The last part follows from Corollary \ref{dimension result}.
\end{proof}

\section{Trivial extension of an almost gentle algebra}\label{sec:trivial extension}

Let $A= KQ/I$ be a finite dimensional algebra and let $D(A)=\Hom_{K}(A,K)$ 
be its $K$-linear dual. Recall that the trivial extension $T(A) = A \rtimes D(A)$
is a symmetric algebra defined as the vector space $A \oplus D(A)$ and with multiplication given by $(a,f)(b,g) = (ab, ag + fb)$, 
for any $a, b \in A$ and $f,g \in D(A)$.  Note that $D(A)$ is an $A$-$A$-bimodule
via the following.  If $a,b\in A$ and $f\in D(A)$,  then $afb\colon A\to K$ by
$(afb)(x)=f(bxa)$.
 We keep the convention that if $x\in KQ$,  and $\pi\colon
KQ\to A$ is the canonical surjection, then we denote $\pi(x)$ by $\bar x$.

Let $\cB$ be the set of finite directed paths in $Q$ and suppose that $I$ is generated by paths;
that is, $KQ/I$ is a monomial algebra.  
Consider the set $\bar\cB=\{p\in \cB\mid p\notin I\}$.  The set $\{\bar p\mid p\in\bar\cB\}$ 
is a $K$-basis of $A$.   We abuse notation and view $\bar \cB$ as a $K$-basis of $A$.  Then
the set $\cM$ of maximal paths of $A$  is a subset of $ \bar\cB$ and forms a $K$-basis of $\soc_{A^e}(A)$. 

The dual basis, $\bar\cB^\vee=\{p^\vee\mid p\in\bar\cB\}$ is a $K$-basis of $D(A)$   where, if $p\in\bar\cB$, 
 $p^\vee\in D(A)$ is the element in $D(A)$ defined by
$p^\vee(q)=\delta_{p,q}$ for $q\in\bar\cB$, where $\delta_{p,q}$ is the Kronecker
delta.

\begin{lemma}\label{basic properties of T(A)} Let $A$ be a 
finite dimensional monomial
algebra with $K$-basis $\bar\cB$ as above.  Then,  
for $p,q,r\in\bar \cB$,  the following
hold  in $T(A)$.
\begin{enumerate}
\item $(p,0)(0,r^\vee)=\begin{cases}(0,s^\vee), &\text{if there is some  }
s\in\bar\cB \text{ with }sp=r\\
0, &\text{otherwise}.
\end{cases}$
\item $(0,r^\vee)(q,0)=\begin{cases}(0,s^\vee), &\text{if there is some  }
s\in\bar\cB \text{ with }qs=r\\
0, &\text{otherwise}.
\end{cases}$
\item  $(0,p^\vee)(q,0)(0,r^\vee)=0$.
\item If  $prq\in\bar\cB$ for some $p,q,r\in\bar\cB$ then
$(q,0)(0,(prq)^\vee)(p,0)=(0,r^\vee)$.
\end{enumerate} 
\end{lemma}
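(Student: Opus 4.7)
The plan is to verify each of the four identities by direct computation, unwinding the definition of the multiplication in $T(A)$ together with the bimodule structure on $D(A)$. Recall that $(a,f)(b,g)=(ab,ag+fb)$, and that setting $b=1$ (resp.\ $a=1$) in $(afb)(x)=f(bxa)$ yields the explicit left and right actions $(a\cdot f)(x)=f(xa)$ and $(f\cdot b)(x)=f(bx)$. Since $A$ is monomial with basis $\bar\cB$, for $q,p\in\bar\cB$ the product $qp$ in $A$ is either again an element of $\bar\cB$ (if $qp\notin I$) or zero. The proofs will be a matter of carefully reading off the $D(A)$-components.

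For (1), I would compute $(p,0)(0,r^\vee)=(0,p\cdot r^\vee)$ and then evaluate the linear form $p\cdot r^\vee$ on an arbitrary basis element $q\in\bar\cB$: by the left action formula, $(p\cdot r^\vee)(q)=r^\vee(qp)$, which is $\delta_{r,qp}$ when $qp\in\bar\cB$ and $0$ when $qp\in I$. Thus $p\cdot r^\vee$ vanishes unless there exists $s\in\bar\cB$ with $sp=r$, in which case such an $s$ is unique (the path $r$ determines its prefix once the suffix $p$ is fixed) and $p\cdot r^\vee=s^\vee$. Part (2) is the symmetric computation using $(r^\vee\cdot q)(x)=r^\vee(qx)$, proceeding identically with the roles of left and right reversed.

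For (3), I would apply (2) first to get $(0,p^\vee)(q,0)=(0,p^\vee\cdot q)$, which is of the form $(0,g)$ for some $g\in D(A)$. Multiplying this by $(0,r^\vee)$ produces $(0\cdot 0,0\cdot r^\vee+g\cdot 0)=(0,0)$, since in the trivial extension any product of two elements of $\{0\}\oplus D(A)$ is zero. For (4), assume $prq\in\bar\cB$. Since $A$ is monomial, every subpath of a non-relation path is again non-zero in $A$, so $pr$ and $r$ also lie in $\bar\cB$. Apply (1) to $(q,0)(0,(prq)^\vee)$: the unique $s\in\bar\cB$ satisfying $sq=prq$ is $s=pr$, giving $(q,0)(0,(prq)^\vee)=(0,(pr)^\vee)$. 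Then apply (2) to $(0,(pr)^\vee)(p,0)$: the unique $t$ with $pt=pr$ is $t=r$, so the product equals $(0,r^\vee)$.

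There is no serious obstacle here; everything reduces to unwinding the conventions. The only point requiring a little care is the direction of the bimodule action on $D(A)$—getting $(a\cdot f)(x)=f(xa)$ rather than $f(ax)$—and the observation that uniqueness of the element $s$ in (1) and (2) rests on the fact that, in a monomial algebra, two paths in $\bar\cB$ whose product with a fixed path $p$ yields the same basis element must be identical. Once these conventions are fixed, the four identities follow from a short symbolic manipulation.
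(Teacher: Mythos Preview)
Your proposal is correct and follows essentially the same approach as the paper's proof, just with the details made explicit: the paper simply states that (1) and (2) are immediate from the multiplication in $T(A)$, that (3) follows from (1), (2) and the vanishing of $(0,x^\vee)(0,y^\vee)$, and that (4) follows from (1) and (2). Your write-up unpacks exactly these computations.
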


\begin{proof}
Parts (1) and (2) are an immediate consequence of the multiplication
in $T(A)$.  Part (3) follows from parts (1) and (2) and that 
$(0,x^\vee)(0,y^\vee)=0$, for all $x,y\in\bar\cB$.
Part (4) follows from parts (1) and (2).

\end{proof}

\begin{proposition}\label{generating T(A)}
Let $A$ be a finite dimensional monomial algebra. 
Then  $T(A)$  is   generated by $\{(a,0)\mid a\in Q_1\}\cup\{(0,m^\vee)\mid m\in\cM\}$.
\end{proposition}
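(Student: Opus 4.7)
The plan is to show that every element of the natural $K$-basis
\[\{(p,0)\mid p\in\bar\cB\}\cup\{(0,r^\vee)\mid r\in\bar\cB\}\]
of $T(A)$ lies in the subalgebra generated by the stated set, which I will call $S$. As is standard for path algebras, the vertex idempotents $(e_v,0)$ are understood to be present in any generating set; together they assemble to the identity of $T(A)$.

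First I would handle the $A$-component. For a nontrivial path $p=a_1\cdots a_n$ in $\bar\cB$, the $T(A)$-multiplication gives $(a_1,0)(a_2,0)\cdots(a_n,0)=(p,0)$ by a trivial induction on $n$, so each such $(p,0)$ is a product of elements of $S$.

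Next I would tackle the $D(A)$-component. The essential observation is that every $r\in\bar\cB$ is a subpath of some maximal path $m\in\cM$: since $A$ is finite dimensional and $I$ is monomial, one may extend $r$ on the right by successively adjoining arrows, so long as the resulting path stays outside $I$, and the process must terminate for dimension reasons. Performing the same extension on the left yields a path $m=prq\in\cM$, where $p,q$ are (possibly trivial) subpaths of $m$ and hence lie in $\bar\cB$. When $p$ and $q$ are both nontrivial, Lemma~\ref{basic properties of T(A)}(4) gives
\[(q,0)(0,m^\vee)(p,0)=(0,r^\vee),\]
exhibiting $(0,r^\vee)$ as a product of elements of $S$. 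The degenerate cases are handled by Lemma~\ref{basic properties of T(A)}(1) or (2): when $p=e_{s(r)}$, one has $(q,0)(0,m^\vee)=(0,r^\vee)$, and symmetrically when $q=e_{t(r)}$; if $r$ itself is maximal then $(0,r^\vee)=(0,m^\vee)\in S$.

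The main obstacle---the only place where the hypotheses on $A$ are used in an essential way---is the existence-of-maximal-extension assertion, which rests on finite dimensionality together with the monomial form of $I$. Once this is established, everything else is routine bookkeeping via the four parts of Lemma~\ref{basic properties of T(A)}.
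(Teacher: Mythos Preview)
Your proposal is correct and follows essentially the same route as the paper's proof: reduce to the natural basis of $T(A)$, write $(p,0)$ as a product of arrow elements, and for $(0,r^\vee)$ extend $r$ to a maximal path $m=prq$ and invoke Lemma~\ref{basic properties of T(A)}(4). You are in fact a bit more careful than the paper, since you explain why the extension to a maximal path terminates and you treat the degenerate cases $p$ or $q$ trivial separately, whereas the paper simply asserts ``there are paths $r$ and $s$ such that $rps\in\cM$'' and applies the lemma directly.
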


\begin{proof}
 Let $\bar\cB$ be a $K$-basis of $A$ as defined above. Since $\{(p,0)\mid p\in\bar\cB\}\cup\{(0,p^\vee)\mid p\in\bar\cB\}$ is
a $K$-basis  of $T(A)$, we need only show that if $p\in\bar\cB$, then $(p,0)$ and
$(0,p^\vee)$  are in the two sided ideal  in $T(A)$ generated by
$\{(a,0)\mid a\in Q_1\}\cup\{(0,m^\vee)\mid m\in\cM\}$.
Let $p\in\bar\cB$.  Since $p$ is a product of arrows,
$(p,0)$ is a product of elements of the form $(a,0)$ where 
$a$ is an arrow in $Q$.   Now consider $(0,p^\vee)$.   There
are paths $r$ and $s$ such that $rps\in\cM$.   But then
$(0,p^\vee)=(s,0)(0, (rps)^\vee)(r,0)$ by Lemma~\ref{basic properties of T(A)} part (4) and we are done.
\end{proof}

We now prove the main result of this section.

\begin{theorem}\label{A* iso to T(A)} Let $A=KQ/I$ be an almost gentle algebra,
$A^*$ be the symmetric special multiserial algebra associated to $A$,
and $T(A)$ be the trivial extension of $A$ by $D(A)$.
Then $A^*$ is  isomorphic to $T(A)$.
\end{theorem}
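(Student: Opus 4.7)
The plan is to build an explicit surjective $K$-algebra homomorphism $\varphi \colon A^* \to T(A)$, check it respects the three families of relations defining $I^*$, and then use the dimension identity of Proposition \ref{dim of A*} together with the generating set from Proposition \ref{generating T(A)} to upgrade surjectivity to an isomorphism. On generators of $A^*=KQ^*/I^*$ I would set $e_v \mapsto (e_v,0)$, $a \mapsto (a,0)$ for $a\in Q_1$, and $a_m \mapsto (0, m^\vee)$ for each $m\in\cM$. These assignments respect source and target vertices, so they extend to a $K$-algebra homomorphism $KQ^* \to T(A)$, and the image already contains the entire generating set of $T(A)$ from Proposition \ref{generating T(A)}.

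The main work is in checking that the three types of relations die. A Type 3 monomial $ab$ with $a,b\in Q_1$ maps to $(ab,0)$, which is zero in $T(A)$ iff $ab\in I$; and since neither $a$ nor $b$ is of the form $a_m$, the path $ab$ lies on some cycle in $\cS$ iff $ab$ is a subpath of a maximal path iff $ab\notin I$, giving the required equivalence. For a Type 1 relation $C-C'$ at a vertex $v$, any cycle $C\in\cS$ based at $v$ decomposes uniquely as $C=p_2 a_m p_1$ for some $m=p_1 p_2\in\cM$ passing through $v$ (with $p_i$ possibly trivial when $v$ is an endpoint of $m$); applying Lemma \ref{basic properties of T(A)}(4) with $r=e_v$ yields
\[
\varphi(C)=(p_2,0)(0,m^\vee)(p_1,0)=(0,e_v^\vee),
\]
independently of which $m$ was chosen, so $\varphi(C-C')=0$. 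For a Type 2 relation $Ca$ where $a$ is the first arrow of $C$ at vertex $v$: if $a\in Q_1$ then $\varphi(Ca)=(0,e_v^\vee)(a,0)=0$ by Lemma \ref{basic properties of T(A)}(2), since no path $s$ satisfies $as=e_v$; if instead $a=a_m$ so that $C=a_m m$, then $\varphi(Ca)=(0,m^\vee)(m,0)(0,m^\vee)=0$ by Lemma \ref{basic properties of T(A)}(3).

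Once these checks are in place, $\varphi$ factors through $I^*$ to give $\bar\varphi\colon A^*\to T(A)$, which is surjective by Proposition \ref{generating T(A)}, and Proposition \ref{dim of A*} furnishes $\dim_K(A^*)=2\dim_K(A)=\dim_K(T(A))$, so $\bar\varphi$ is an isomorphism. I expect the Type 1 verification to be the main obstacle: one must recognize that several distinct cycles of $\cS$ can be based at the same vertex $v$ (one for each maximal path through $v$), yet all of them must map to the common element $(0,e_v^\vee)$. Lemma \ref{basic properties of T(A)}(4) is the precise tool that collapses each triple product $(p_2,0)(0,m^\vee)(p_1,0)$ onto the single socle generator at $v$ in $T(A)$, mirroring on the trivial extension side the Type 1 identification of cycles in $A^*$.
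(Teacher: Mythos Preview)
Your overall strategy matches the paper's proof exactly: define $\varphi$ on $KQ^*$ by $e_v\mapsto(e_v,0)$, $a\mapsto(a,0)$, $a_m\mapsto(0,m^\vee)$, verify the three types of relations lie in $\ker\varphi$, then conclude by surjectivity (Proposition \ref{generating T(A)}) and the dimension count (Proposition \ref{dim of A*}). Your handling of Type 1 and Type 2 is correct and essentially identical to the paper's.

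There is, however, a genuine gap in your Type 3 verification. You treat only the case $a,b\in Q_1$, writing ``since neither $a$ nor $b$ is of the form $a_m$''. But Type 3 relations are quadratic monomials $ab$ in $Q^*$, not in $Q$, so $a$ or $b$ (or both) may be one of the new arrows $a_m$. You must check three further cases. If $a\in Q_1$ and $b=a_m$, then $\varphi(ab)=(a,0)(0,m^\vee)$; by Lemma \ref{basic properties of T(A)}(1) this is nonzero only if $a$ is the last arrow of $m$, but then $aa_m$ lies on the cycle $ma_m\in\cS$, contradicting that $ab$ is Type 3. The case $a=a_m$, $b\in Q_1$ is symmetric via Lemma \ref{basic properties of T(A)}(2). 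Finally, if $a=a_m$ and $b=a_{m'}$ then $\varphi(ab)=(0,m^\vee)(0,{m'}^\vee)=0$ since $D(A)\cdot D(A)=0$ in $T(A)$. These are precisely the four cases the paper works through; once you add them, your argument is complete.
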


\begin{proof}  Let $Q^*$ be the quiver of $A^*$ which is defined in Section \ref{symm spec assoc to A}.
We begin by defining a ring surjection $\varphi$ from $KQ^*$ to $T(A)$.  Since the vertices of $Q^*$ are
the same as the vertices in $Q$ and since $Q$ is a subquiver of $Q_T$, the quiver of $T(A)$,
we send a vertex $v$ in $Q^*$ to $\bar v$, the image of $v$ in $T(A)$ under the canonical
surjection $KQ_T\to T(A)$.  We define $\varphi$ on arrows as follows.
If $a$ is an arrow in    $ Q\subseteq Q^*$ , let $\varphi(a)=(a,0)$.  If $m\in\cM$,
then $\varphi(a_m)=(0,m^\vee)$.  Note that $a_m$ is an arrow from $t (m)$ to
$s(m)$ and that $(0,m^\vee)=(e_{t(m)},0)(0,m^\vee)(e_{s(m)},0)$.  By the universal 
mapping property of a path algebra, we obtain a $K$-algebra homomorphism
$\varphi\colon KQ^*\to T(A)$.  By  Proposition \ref{generating T(A)}, $\varphi$ is
a surjection.

Next we show that $I^*$ (defined in Section \ref{symm spec assoc to A}) is
contained in $\ker(\varphi) $.   For this,
we prove that relations of Types 1, 2, and 3 are in $\ker(\varphi)$.
Recall that $\cS$ is defined to be the set of simple cycles in $Q^*$ that
are cyclic permutations of the cycles $ma_m$, for some $m\in \cM$.
We begin with a Type 1 relation.
Let $C,C'\in\cS$ be  cycles in $\cS$ at a vertex $v\in Q^*$.  We need to
show that $\varphi(C-C')=0$.  Let $m,m'\cM, p,q,p',q'\in\bar\cB$ such
that $pq=m$, $p'q'=a_{m'}$, $C=qa_mp$, and $C'=q'a_{m'}p'$.
Then  $\varphi(C)=(p,0)(0,m^\vee)(q,0)$.  Since $C$  is a cycle at $v$,
by Lemma \ref{basic properties of T(A)},
$(p,0)(0,m^\vee)(q,0)=(0.qm^\vee p)=(0,r^\vee)$ where $prq=m$.  It follows
that $r=e_v$ since $pe_vq=m$.  Thus, we have shown that
$\varphi(C)=(0,e_v^\vee)$.  By a similar argument, $ \varphi(C')=(0,e_v^\vee)$
and we conclude that $\varphi(C-C')=0$.

Next we show that Type 2 relations are sent to $0$ by $\varphi$.
Let $C\in\cS$ is a cycle at $v$, with first arrow $b$. Either $b$ is an arrow in $Q$ or
$b=a_m$ for some $m\in\cM$.  Then $\varphi(Cb)=(0,e_v^\vee)\varphi(b)$.  If $b$ is
an arrow in $Q$, then $(0,e_v^\vee)(b,0)=(0,be_v^\vee)$.  If $(0,be_v^\vee) \ne 0$ then
$(0,be_v^\vee)=(0,r^\vee)$ where $rb=e_v$, which is not possible since $b$ is an arrow.
If $b=a_m$, for some $m\in\cM$, then    $\varphi(Cb)=(0,e_v^\vee)(0, m^\vee)=
0$ by Lemma \ref{basic properties of T(A)}.  Hence we have shown that
Type 2 relations are sent to $0$ under $\varphi$.

Finally, let $ab$ be a Type 3 relation.  We want to show that $\varphi(ab)=0$.
There are 4 cases:  both $a$ and $b$ are arrows in $Q$,  $a$ is an arrow in $Q$
and $b=a_m$ for some $m\in \cM$,  $b$ is an arrow in $Q$
and $a=a_m$ for some $m\in \cM$, and $a=a_m$, $b=a_{m'}$ for some $m,m'\in \cM$.
If  both $a$ and $b$ are arrows in $Q$,  then since $ab$ is a relation in $A^*$ and
hence $ab=0$.  Next suppose that  $a$ is an arrow in $Q$
and $b=a_m$ for some $m\in \cM$.  Then $\varphi(aa_m)=(a,0)(0,m^\vee)=(0,am^\vee)$.
If $(0,am^\vee)\ne 0$, then $(0,am^\vee)=(0,r^\vee)$ where  $ra=m$.  But then
$a$ is the last arrow in $m$ and $aa_m$ is not a Type 2 relation.  The
case where $a=a_m$ for some $m\in \cM$ and $b$ is an arrow is handled
in a similar fashion to the last case.  The final case is when $a=a_m$ and
$b=a_{m'}$ for some $m,m'\in\cM$.  Then $\varphi(ab)=(0,m^\vee)(0,{m'}^\vee)=0$
by \ref{basic properties of T(A)}(4).  This completes the proof that $\varphi(I^*)=0$.

Since $\varphi\colon KQ^*\to T(A)$ is a surjection and $\varphi(I^*)=0$, $\varphi$
\sloppy induces a surjection $\psi\colon KQ^*/I^*\to T(A)$.   Now $A^*=KQ^*/I^*$ 
and by Proposition \ref{dim of A*},  $\dim_K(A^*)=2\dim_K(A)$.  Clearly,
$\dim_K(T(A)=2\dim_K(A)$.   Hence $\psi\colon A^*\to T(A)$ is an isomorphism. 
\end{proof}

\begin{corollary} Let $A=KQ/I$ be an almost gentle algebra. Then $T(A)$ is symmetric special multiserial, that is $T(A)$ is a Brauer configuration algebra. 
\end{corollary}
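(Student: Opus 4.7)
The plan is to observe that this corollary is a direct consequence of the main theorem of the section together with the construction in Section~\ref{symm spec assoc to A}. First, by Theorem~\ref{A* iso to T(A)}, the trivial extension $T(A)$ is isomorphic as a $K$-algebra to $A^*$, the symmetric special multiserial algebra associated to $A$ via the defining pair $(\cS,\mu)$.

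Next, I would invoke the earlier theorem in Section~\ref{symm spec assoc to A} (the one stating that $A^*$ is a symmetric special multiserial algebra, and hence a Brauer configuration algebra), which itself is built on the results of \cite{GS3} applied to the defining pair $(\cS,\mu)$ that was verified to be a defining pair in Proposition~\ref{defining pair}. Being symmetric special multiserial is an invariant of the isomorphism class of a $K$-algebra, so this property transfers from $A^*$ to $T(A)$.

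Finally, I would appeal to the result from \cite{GS2} (mentioned in the introduction and in Section~\ref{symm spec assoc to A}) which says that the class of symmetric special multiserial algebras coincides with the class of Brauer configuration algebras. Combining these three ingredients gives the statement of the corollary with no additional calculation required; there is no substantive obstacle, since all the hard work has already been done in proving that $(\cS,\mu)$ is a defining pair, that $A^*$ is symmetric special multiserial, and that $\varphi\colon A^*\to T(A)$ is an isomorphism.
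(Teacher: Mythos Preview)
Your proposal is correct and matches the paper's reasoning exactly: the paper states this corollary without proof immediately after Theorem~\ref{A* iso to T(A)}, since it follows directly from that isomorphism together with the theorem in Section~\ref{symm spec assoc to A} asserting that $A^*$ is symmetric special multiserial (and hence a Brauer configuration algebra by \cite{GS2}). There is nothing to add.
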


We end this section with an open question.

\begin{Question}{\rm 
Is it true that if a trivial extension $T(A)$ of a finite dimensional $K$-algebra is special multiserial then $A$ is almost gentle? }
\end{Question}

\section{Admissible cuts}\label{sec:admissible cuts}
Let $\Lambda=KQ_{\Lambda}/I_{\Lambda}$ be the  symmetric special multiserial algebra given by the defining
pair $(\cS,\mu)$.  There is an equivalence
relation on $\cS$ given by two special cycles are equivalent if one is a cyclic permutation of the
other.  Let $\{C_1,\dots,C_t \}$ be a set of equivalence class representatives.

\begin{definition}
{\rm An \emph{admissible cut $D$} of $Q_{\Lambda}$ is a subset of arrows in $Q_{\Lambda}$ consisting of exactly one arrow  in each special cycle corresponding to   an equivalence class representative  $C_i$, for $i = 1, \ldots, t$. We call 
$kQ_{\Lambda}/ \langle I_{\Lambda}\cup D\rangle$ the \emph{cut algebra associated to $D$} where $\langle I_{\Lambda} \cup D\rangle$ is the ideal generated by $I_{\Lambda} \cup D$.  }
\end{definition}

 Recall that any symmetric special multiserial algebras is defined by cycles. We show the following theorem.

\begin{theorem}\label{CutAlmostGentle}
Let $\Lambda = KQ_{\Lambda}/I_{\Lambda}$ be a  symmetric special multiserial algebra defined by a defining
pair $(\cS,\mu)$ and let $D$ be an admissible cut of $Q_{\Lambda}$. Set  
$Q$ to be the quiver given by $Q_0 = (Q_{\Lambda})_0$ and $Q_1 = (Q_{\Lambda})_1 \setminus D$ . 
Then the cut algebra, $KQ_{\Lambda} / \langle I_{\Lambda} \cup D\rangle$, associated to $D$
is isomorphic to $KQ /( I_{\Lambda}\cap KQ)$. 

Moreover, $KQ/( I_{\Lambda}\cap KQ)$ is an almost gentle algebra. 
\end{theorem}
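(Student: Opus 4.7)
The plan is to prove the theorem in two stages: first to establish the algebra isomorphism $KQ_{\Lambda}/\langle I_{\Lambda}\cup D\rangle\cong KQ/(I_{\Lambda}\cap KQ)$ by exhibiting mutually inverse $K$-algebra maps, and then to verify that $KQ/(I_\Lambda\cap KQ)$ satisfies conditions (S0) and (S1) of the almost gentle definition.

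For the isomorphism, the composition of the inclusion $KQ\hookrightarrow KQ_{\Lambda}$ with the projection onto $KQ_\Lambda/\langle I_\Lambda\cup D\rangle$ annihilates $I_\Lambda\cap KQ\subseteq I_\Lambda$, so it induces a map $\bar\phi\colon KQ/(I_\Lambda\cap KQ)\to KQ_\Lambda/\langle I_\Lambda\cup D\rangle$. In the opposite direction, I would define $\psi\colon KQ_\Lambda\to KQ/(I_\Lambda\cap KQ)$ as the $K$-algebra map fixing vertices and the arrows of $Q_1$ and sending each arrow of $D$ to zero. Since every simple cycle $C\in\cS$ meets $D$ in exactly one arrow (by axioms D3, D4 applied to the chosen admissible cut), we have $\psi(C)=0$, which annihilates all Type~1 relations $C^{\mu(C)}-{C'}^{\mu(C')}$ and Type~2 relations. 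A Type~3 relation $ab$ vanishes under $\psi$ either because $a\in D$ or $b\in D$, or because $ab\in I_\Lambda\cap KQ$ is zero in the target. Hence $\psi$ descends to $\bar\psi\colon KQ_\Lambda/\langle I_\Lambda\cup D\rangle\to KQ/(I_\Lambda\cap KQ)$, and $\bar\phi,\bar\psi$ act as the identity on the generators (vertices and arrows of $Q_1$), hence are mutually inverse.

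For the almost gentle property, condition (S1) transfers directly from the special multiserial condition on $\Lambda$: for $a\in Q_1\subseteq(Q_\Lambda)_1$, the at-most-one successor $\sigma_\Lambda(a)$ with $a\sigma_\Lambda(a)\notin I_\Lambda$ is the only candidate for a successor in $Q_1$, valid precisely when $\sigma_\Lambda(a)\notin D$; the predecessor case is symmetric. The heart of the argument is condition (S0), that $I_\Lambda\cap KQ$ is generated by paths of length two. The plan is to combine two observations. First, any path $p$ in $Q$ nonzero in $\Lambda$ lies as a subpath of some $C^{\mu(C)}$ with $C\in\cS$; since $p$ avoids the unique $D$-arrow of $C$ (which appears $\mu(C)$ times in $C^{\mu(C)}$), it is confined to a single gap between consecutive occurrences of that arrow, forcing $\ell(p)\leq\ell(C)-1<\ell(C)\mu(C)$. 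Consequently $p$ lies strictly below the socle level of $\Lambda$, so no Type~1 identification applies to $p$, and distinct such paths in $Q$ remain $K$-linearly independent in $\Lambda$. Second, if a path $p=a_1\cdots a_n$ in $Q$ has every consecutive pair $a_ia_{i+1}\notin I_\Lambda$, then by (S1) the arrows satisfy $a_{i+1}=\sigma(a_i)$, so $p$ is a subpath of $a_1\sigma(a_1)\sigma^2(a_1)\cdots$ and hence of some $C^{\mu(C)}$, giving $p\notin I_\Lambda$. Combining these, if $x\in I_\Lambda\cap KQ$ is expanded in the path basis of $KQ$, then by linear independence only the monomials $p_i\in I_\Lambda$ can have nonzero coefficient, and each such $p_i$ contains a length-two subpath in $I_\Lambda\cap KQ$ by the contrapositive of the second observation. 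Thus $x$ lies in the two-sided ideal of $KQ$ generated by length-two Type~3 relations with both arrows in $Q_1$, proving (S0).

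The main obstacle I anticipate is that $I_\Lambda$ is not a monomial ideal — the Type~1 binomials $C^{\mu(C)}-{C'}^{\mu(C')}$ create nontrivial linear identifications in $\Lambda$ — so one cannot simply argue that $I_\Lambda\cap KQ$ is spanned by paths, let alone by length-two paths. The crucial leverage is the length bound $\ell(p)\leq\ell(C)-1$ for paths $p\in KQ$ nonzero in $\Lambda$, which places them strictly below the ``socle level'' $\ell(C)\mu(C)$ where Type~1 identifications live; this is what makes $I_\Lambda\cap KQ$ behave monomially and admit the required length-two generating set.
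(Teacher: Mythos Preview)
Your proof is correct. The isomorphism argument via mutually inverse maps $\bar\phi,\bar\psi$ is essentially a two-sided repackaging of what the paper does: the paper works only with the single surjection $f\colon KQ\to KQ_\Lambda/\langle I_\Lambda\cup D\rangle$ induced by the inclusion of quivers and computes $\ker f=I_\Lambda\cap KQ$ directly.

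The genuine divergence is in establishing (S0). The paper obtains it for free from its kernel computation: writing an arbitrary element of $\langle I_\Lambda\cup D\rangle\cap KQ$ as a $KQ_\Lambda$-combination of Type~1, Type~2, Type~3 generators and $D$-arrows, one observes that every Type~1 and Type~2 generator involves some $C\in\cS$ and hence a $D$-arrow, as does every $r'''a_ds'''$. Since the left-hand side lies in $KQ$, the $D$-arrow part of the expansion cancels, leaving only Type~3 terms $r''(ab)s''$ with $r'',a,b,s''$ all in $Q$. Thus $I_\Lambda\cap KQ$ is visibly generated by length-two monomials. Your route instead appeals to the module structure of $\Lambda$: the length bound $\ell(p)\le\ell(C)-1$ for $Q$-paths nonzero in $\Lambda$ keeps them strictly below the socle, where distinct paths form part of the known $K$-basis of a symmetric special multiserial (equivalently, Brauer configuration) algebra; linear independence then forces $I_\Lambda\cap KQ$ to be spanned by monomials, and the contrapositive of your second observation supplies the length-two generators. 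Both arguments are valid; the paper's is shorter and entirely self-contained, while yours makes explicit the mechanism by which the non-monomial Type~1 relations fail to contaminate $I_\Lambda\cap KQ$---precisely the obstacle you flagged. Note that your linear-independence step does lean on the basis description of $\Lambda$ from \cite{GS1,GS3}, which you should cite rather than leave implicit.
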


\begin{proof}
The inclusion of quivers $Q \subset Q_{\Lambda}$ induces a $K$-algebra \sloppy  homomorphism $f : KQ \rightarrow  KQ_{\Lambda} / 
\langle I_{\Lambda} \cup D\rangle$. We show that $f$ is surjective. Let $\sum_p \lambda_p p $ be an element in $KQ_{\Lambda}$; that is, $\lambda_p\in K$, with almost all $\lambda_p=0$ and $p$ a path in $Q_{\Lambda}$. Then 
$\sum_p \lambda_p p = \sum_{q} \lambda_q q + \sum_r \lambda_r r $ where the first sum runs over all paths $q$ such that no arrow of $D$ occurs in $q$ and the second sum runs over all paths $r$ in $Q$ such that there is at least one arrow of $D$ in $r$. Then $\sum \lambda_r r$ is in the ideal $
\langle I_{\Lambda} \cup D\rangle$ and 
$\sum \lambda_q q $ is in the image of $KQ$ in $KQ_{\Lambda}$. It follows that $f$ is surjective. 

We now show that $\ker f = I_{\Lambda} \cap KQ$. 
Clearly $I_{\Lambda} \cap KQ \subset \ker f$. Now suppose that $f(\sum \lambda_p p) = 0$. Then 
$\sum \lambda_p p $ is in $  \langle I_{\Lambda} \cup D\rangle$.
Thus \[\sum\lambda_pp=\]
\[\sum \lambda_{r,s}r(C^{\mu(C)}-(C'^{\mu(C')}))s + \sum 
\lambda_{r',s'} r'C^{\mu(C)}as'
+\sum \lambda_{r'',s''}r''abs'' + \sum\lambda_{r''',s'''} r'''a_ds''',\]
where the $\lambda_{*,*}$ are elements of $K$, the $r,r',r'',r''',s,s',s'',s'''$ are paths,
the $C^{\mu(C)}-(C'^{\mu(C')})$ are Type 1 relations, the $C^{\mu(C)}a$ are Type 2
relations, the $ab$ are Type 3 relations, and the $a_d$ are arrows in  $ D$.
Since the left hand side is a $K$-linear combination of paths in $Q$, the sum of all
paths having at least one arrow in $D$ on the right hand side must equal $0$.  Each $C\in\cS$
has an arrow in $D$, so we conclude that
\[\sum\lambda_pp=\sum \lambda_{r'',s''}r''abs'',\]
where $ab$ is a Type 3 relation and no arrow in $D$ occurs in any $r''abs''$.
Noting that such $ab$ are in  $I_{\Lambda}\cap KQ$, we conclude that $f$ is an isomorphism.

It also follows from the above that the relations in $I_{\Lambda}\cap KQ$ are monomial quadratic. Suppose $ab \notin I_{\Lambda}\cap KQ$ and $a b' \notin I_{\Lambda}\cap KQ$ for $a, b, b' \in Q_1$. Then 
  $ab \notin I_{\Lambda}$ and $ab' \notin I_{\Lambda}$ which is a contradication since by \cite{GS2} 
 $KQ_{\Lambda}/ I_{\Lambda}$ is special multiserial.  Similarly we see that, given an arrow
in $Q$, there is at most one arrow $c\in Q_1$ such that $ca\notin I_{\Lambda}\cap KQ$.  Hence, $KQ/(I_{\Lambda} \cap
KQ)$ is
a special multiserial algebra and we have shown that $KQ/(I_{\Lambda}\cap KQ)$ is an almost gentle algebra. 
\end{proof}

The next result shows that if one starts with an almost gentle algebra and takes the appropriate admissible
cut in the trivial extension of the almost gentle algebra then the almost gentle algebra is isomorphic to
the cut algebra. 

\begin{theorem} 
Let $A = KQ/I$ be an almost gentle algebra with set of maximal paths $\cM$ and let $T(A)=Q_{T(A)}$ be the trivial extension of $A$ by $D(A)$ where the set of new arrows
of $Q_{T(A)}$ is given by $D = \{ \beta_m, m \in \cM \}$. Then $D$ is an admissible cut of $Q_{T(A)}$ and the cut algebra associated to $D$ is 
isomorphic to $A$. 
\end{theorem}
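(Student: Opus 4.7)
The plan is to reduce the statement to the symmetric special multiserial algebra $A^*$ of Section \ref{symm spec assoc to A} via the isomorphism of Theorem \ref{A* iso to T(A)}. Under this identification, the quiver $Q_{T(A)}$ is identified with $Q^*$, and for each $m\in\cM$ the new arrow $\beta_m$ corresponds to the arrow $a_m$ of $Q^*$. With this translation, the problem becomes: verify that $D=\{a_m:m\in\cM\}$ is an admissible cut of $Q^*$ with respect to the defining pair $(\cS,\mu)$, and then identify the associated cut algebra with $A$.

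First I would check that $D$ is an admissible cut. Cyclic permutation is an equivalence relation on $\cS$, and by the very construction of $\cS$ (cyclic permutations of the cycles $C_m=ma_m$ for $m\in\cM$) the equivalence classes are in bijection with $\cM$; for each class I choose $C_m$ as its representative. By construction, the arrow $a_m$ is fresh and occurs only in $C_m$ and its cyclic permutations, i.e.\ in a unique equivalence class. Hence $D$ contains exactly one arrow from each representative $C_m$, so $D$ is an admissible cut.

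Next I would invoke Theorem \ref{CutAlmostGentle} with $\Lambda=A^*\cong T(A)$: the cut algebra is isomorphic to $KQ/(I^*\cap KQ)$, where $Q$ is precisely the original quiver of $A$ (obtained from $Q^*$ by deleting the arrows of $D$). It remains to show $I^*\cap KQ=I$. For the inclusion $I\subseteq I^*\cap KQ$, note that $I$ is generated by length-two paths $ab$ with $a,b\in Q_1$ and $ab\in I$; such $ab$ is not a subpath of any maximal path $m\in\cM$, hence does not lie on any cycle of $\cS$, so $ab$ is a Type 3 relation of $I^*$ sitting inside $KQ$. For the reverse inclusion, the argument in the proof of Theorem \ref{CutAlmostGentle} (the displayed expansion of a general element of the cut ideal) shows that Type 1 and Type 2 relations always involve some $a_m$ and therefore cannot contribute to $I^*\cap KQ$; what remains is a combination of Type 3 relations $ab$ with $a,b\in Q_1$, and any such $ab$ is a length-two path not lying on any $C_m$, i.e.\ not a subpath of any maximal path, hence $ab\in I$. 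This gives $I^*\cap KQ=I$ and therefore the cut algebra is $KQ/I=A$.

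The main obstacle, modest as it is, is the clean identification $I^*\cap KQ=I$: one must keep track of how the three types of generating relations for $I^*$ interact with the cut, and in particular check that Type 1 and Type 2 relations really do disappear under the intersection with $KQ$. This is already essentially carried out in the final paragraphs of the proof of Theorem \ref{CutAlmostGentle}, so once $D$ is recognized as an admissible cut the rest of the proof is almost bookkeeping via that earlier argument combined with the isomorphism $A^*\cong T(A)$.
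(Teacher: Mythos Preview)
Your proposal is correct and follows essentially the same route as the paper: both arguments identify $T(A)$ with $A^*$ via Theorem~\ref{A* iso to T(A)}, observe that each special cycle $C_m=ma_m$ contains exactly one new arrow so that $D$ is an admissible cut, and then appeal to Theorem~\ref{CutAlmostGentle}. The paper's proof is terser, simply stating that ``the constructions in Theorem~\ref{CutAlmostGentle} give the result,'' whereas you supply the extra bookkeeping verifying $I^*\cap KQ=I$ explicitly; this added detail is correct and arguably makes the argument more self-contained, but it is not a genuinely different approach.
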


\begin{proof}
It follows from the construction  of $T(A)$ that there exists exactly one arrow from $D$ in any special cycle. Hence $D$ is an admissible 
cut of $T(A)$. The constructions in Theorem~\ref{CutAlmostGentle} give the result. 
\end{proof}

The next result shows that if one starts with a symmetric special multiserial algebra
defined by a defining pair $(\cS,\mu)$ and $\mu\equiv 1$ and an admissible cut $D$
then the algebra associated to $D$, trivially extended by its dual, is isomorphic to the original
symmetric special multiserial algebra.

\begin{theorem}\label{ext:trivial extension}
Let $\Lambda =KQ_{\Lambda} / I_{\Lambda}$ be a symmetric special multiserial algebra defined
by the defining pair
$(\cS,\mu)$ and assume that  $\mu\equiv 1$.
Let $D$ be an admissible cut of $Q_{\Lambda}$. Denote by $A =KQ/I$
the cut algebra associated to $D$. Then $T(A)$ is isomorphic to $\Lambda$. 
\end{theorem}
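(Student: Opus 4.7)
The proof plan is to combine the two main earlier results: Theorem~\ref{A* iso to T(A)}, which identifies $T(A)$ with the symmetric special multiserial algebra $A^*$ associated to the almost gentle algebra $A$, and Theorem~\ref{CutAlmostGentle}, which identifies the cut algebra with $KQ/(I_\Lambda \cap KQ)$. Thus the proof reduces to establishing an isomorphism $A^* \cong \Lambda$, which is what one expects on combinatorial grounds: the construction of $A^*$ adjoins exactly one new arrow per maximal path in order to close it up into a special cycle, and the admissible cut $D$ removes exactly one arrow per equivalence class of special cycles, so the two operations are inverse to each other.

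More precisely, the first step would be to set up the bijection between the set $\cM$ of maximal paths of $A$ and the set of equivalence classes of cycles in $\cS$. Given an equivalence class with representative $C = c_1 c_2 \cdots c_k \in \cS$, let $c_j$ be the (unique) arrow of $C$ belonging to the admissible cut $D$. I would show that the cyclic rotation $m_C := c_{j+1} c_{j+2} \cdots c_k c_1 \cdots c_{j-1}$, viewed as a path in $Q$, is a maximal path of $A = KQ/(I_\Lambda \cap KQ)$: it is not in $I$ because no length-two subpath of $C$ is a Type 3 relation in $\Lambda$, and it cannot be extended in $Q$ on either end because any extending arrow would either be the removed arrow $c_j$ (no longer in $Q_1$), or yield a Type 3 relation in $\Lambda$, hence a relation in $I$. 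Conversely, every maximal path of $A$ arises this way, essentially because each arrow in $Q$ lies in a unique cycle of $\cS$ (by D4) and in a unique maximal path of $A$ (Lemma~\ref{unique maximal path}).

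With this bijection in hand, the next step is to construct an explicit isomorphism $\psi \colon A^* \to \Lambda$. On vertices $\psi$ is the identity; on arrows of $Q \subseteq Q^*$ it is the inclusion $Q \hookrightarrow Q_\Lambda$; and on each new arrow $a_{m_C} \in (Q^*)_1$ it sends $a_{m_C}$ to the cut arrow $c_j \in D$. Since these images have the correct source and target, $\psi$ extends to a $K$-algebra homomorphism $KQ^* \to \Lambda$, and it remains to check that the three types of defining relations for $A^*$ map into $I_\Lambda$. Because $\mu \equiv 1$, Type~1 relations $C^* - C'^*$ in $A^*$ map to differences of two cycles in $\cS$ based at the same vertex, which is a Type~1 relation of $\Lambda$ (using $\mu \equiv 1$ again so the exponents are $1$); Type~2 relations map to Type~2 relations of $\Lambda$ (since $\psi$ takes a special cycle of $A^*$ to a special cycle of $\Lambda$, and its first arrow to the first arrow of the image); and Type~3 relations $ab$ of $A^*$ map to products that do not lie on any $\Lambda$-cycle in $\cS$ and are therefore in $I_\Lambda$ (a small case distinction depending on whether $a$ or $b$ are old or new arrows, exactly as in the proof of Theorem~\ref{A* iso to T(A)}). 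Thus $\psi$ descends to a surjective homomorphism $A^* \to \Lambda$.

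Finally one must check that $\psi$ is an isomorphism, for which it suffices to compare dimensions. By Proposition~\ref{dim of A*} we have $\dim_K(A^*) = 2|Q_0| + \sum_{m \in \cM} \ell(m)(\ell(m)+1)$, and using the bijection $\cM \leftrightarrow \cS/{\sim}$ together with $\ell(m_C) = \ell(C) - 1$, this equals the analogous sum for $\Lambda$ (which can be computed directly from the defining pair $(\cS,\mu)$ with $\mu \equiv 1$, or invoked as a standard dimension formula for symmetric special multiserial algebras). The main technical obstacle I anticipate is the careful Type-3 bookkeeping when verifying $\psi(I^*) \subseteq I_\Lambda$, since the roles of "old" and "new" arrows in $A^*$ and in $\Lambda$ are swapped by $\psi$; but the calculation is essentially parallel to the one already carried out in the proof of Theorem~\ref{A* iso to T(A)}, so it should go through without surprises.
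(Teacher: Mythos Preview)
Your proposal is correct and follows essentially the same route as the paper: both arguments identify the maximal paths of the cut algebra $A$ with the special cycles of $\Lambda$ after deleting the cut arrow, and then conclude that the construction of $A^*$ (adjoining one arrow per maximal path) exactly reverses the cut, so that $A^*\cong\Lambda$ and hence $T(A)\cong\Lambda$ by Theorem~\ref{A* iso to T(A)}. The paper compresses the second half into the phrase ``it is now easy to see,'' whereas you spell out the isomorphism $\psi$, the relation-by-relation check, and the dimension count; your added detail is sound and the anticipated Type~3 case analysis indeed goes through without difficulty.
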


\begin{proof}
 The special cycles in $Q_{\Lambda}$ are of the form $C = p_1  \beta p_2$, for $\beta \in D$ and where $p_1 = a_1 \ldots a_r$ and 
$p_2 =   a_{r+1} \ldots a_s$. We now show that $p_2 p_1$ is a maximal path in $A$. Since $C$ is  a special cycle, we have special cycles $p_2p_1\beta$ and $\beta p_2p_1$.  Thus, $p_2p_1\notin I_\Lambda$ and
hence $p_2p_1\notin I_{\Lambda}\cap KQ$.  Since $\Lambda$ is a special multiserial algebra,
and both $a_r\beta$ and $\beta a_{r+1}$ are not in $I_{\Lambda}$, we see that
$a_rb$ and $ba_{r+1}$ are in $I_{\Lambda}$ for all arrows $b\in Q$.  Thus
$A$ is an almost gentle algebra since $I=I_{\Lambda}\cap KQ$ is generated by quadratic monomials
and is special multiserial.  It is now easy to see that $T(A)$ is isomorphic to $\Lambda$.

\end{proof}

Consider the set of pairs $(\Lambda, D)$ such that $\Lambda=KQ_{\Lambda}/I_{\Lambda}$ is a symmetric special
multiserial $K$-algebra and $D$ is an admissible cut in $Q_{\Lambda}$.   We say $(\Lambda,D)$ and
$(\Lambda',D')$ are \emph{equivalent} if there is a $K$-algebra isomorphism from $\Lambda$ to $\Lambda'$ sending
$D$ to $D'$ and let $\cX$ denote the equivalent classes. The next result is an immediate consequence
of the previous two theorems.

\begin{corollary}
There is a bijection $\varphi: \cA \longrightarrow \cX$ from the set $\cA$ of isomorphism classes of almost gentle algebras to the set $\cX$ of equivalence classes of pairs consisting of a symmetric special multiserial algebra and a cut as defined above. The isomorphism is given, for $A \in \cA$, by $\varphi (A) = (T(A), D)$ where $D = \{\beta_m\mid m\text{ a maximal path in }A\}$. Moreover, for $(\Lambda,D) \in \cX$, we have $\varphi^{-1}(\Lambda,D)  = A$ where $A$ is the isomorphism class of the algebras associated to the cut $D$.


\end{corollary}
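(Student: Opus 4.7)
The plan is to assemble the bijection directly from the two preceding theorems, which already establish the key passages in both directions. First I would check that $\varphi$ is well-defined on isomorphism classes. Given an isomorphism $A \to A'$ of almost gentle algebras, it induces an isomorphism of trivial extensions $T(A) \to T(A')$, and because the set of maximal paths of $A$ is mapped bijectively onto the set of maximal paths of $A'$, the associated set of new arrows $D_A = \{\beta_m \mid m \in \cM(A)\}$ is sent to $D_{A'}$. Hence $(T(A),D_A)$ and $(T(A'),D_{A'})$ are equivalent, so $\varphi([A]) = [(T(A),D_A)] \in \cX$ is well-defined. By Theorem~\ref{A* iso to T(A)} and the discussion preceding Theorem~\ref{ext:trivial extension}, the pair $(T(A),D_A)$ is a symmetric special multiserial algebra together with an admissible cut, so $\varphi$ lands in $\cX$.

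Next I would define the candidate inverse $\psi\colon \cX \to \cA$ by sending the class of a pair $(\Lambda, D)$ to the isomorphism class of the cut algebra $KQ/(I_{\Lambda}\cap KQ)$ (where $Q$ has the same vertices as $Q_{\Lambda}$ and arrow set $(Q_{\Lambda})_1 \setminus D$). That this is an almost gentle algebra is exactly the content of Theorem~\ref{CutAlmostGentle}. To see that $\psi$ is well-defined on equivalence classes, note that an isomorphism $\Lambda \to \Lambda'$ sending $D$ to $D'$ restricts to an isomorphism of the corresponding cut algebras.

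It then remains to verify $\psi \circ \varphi = \operatorname{id}_{\cA}$ and $\varphi \circ \psi = \operatorname{id}_{\cX}$. For the first, start with an almost gentle algebra $A$: by the theorem preceding Theorem~\ref{ext:trivial extension}, $D_A$ is an admissible cut of $T(A)$ and the associated cut algebra is isomorphic to $A$, so $\psi \varphi [A] = [A]$. For the second, start with $(\Lambda, D) \in \cX$ and let $A$ be the cut algebra; by Theorem~\ref{ext:trivial extension}, $T(A) \cong \Lambda$. Under this isomorphism, the arrows of $D_A = \{\beta_m \mid m \in \cM(A)\}$ correspond exactly to the arrows of $D$, because the construction of $T(A)$ introduces one new arrow for each maximal path $m = p_2 p_1$ of $A$, and these paths are in bijection with the special cycles of $\Lambda$ via the single arrow of $D$ each such cycle contains. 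Hence $(T(A),D_A)$ is equivalent to $(\Lambda,D)$, giving $\varphi \psi [(\Lambda,D)] = [(\Lambda,D)]$.

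The only real subtlety, and what I would view as the main point to check carefully, is precisely this last identification of $D_A$ with $D$ under the isomorphism $T(A) \cong \Lambda$ of Theorem~\ref{ext:trivial extension}; one must trace through the construction to confirm that the canonical bijection between maximal paths in $A$ and equivalence classes of special cycles in $\Lambda$ matches each $\beta_m \in D_A$ with the unique cut arrow in the corresponding cycle, so that the pair-level equivalence holds. Once this is in place, the two equalities above give the bijection $\varphi$ with inverse $\psi$ as claimed.
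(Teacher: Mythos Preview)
Your proposal is correct and follows exactly the approach the paper intends: the paper's own proof is simply the sentence ``The next result is an immediate consequence of the previous two theorems,'' and you have carefully unpacked what that sentence means, including the well-definedness checks and the two inverse identities $\psi\varphi=\mathrm{id}$ and $\varphi\psi=\mathrm{id}$. The subtlety you flag---that the isomorphism $T(A)\cong\Lambda$ of Theorem~\ref{ext:trivial extension} carries $D_A$ to $D$---is indeed the only point requiring a moment's thought, and your explanation via the bijection between maximal paths of $A$ and special cycles of $\Lambda$ is the right one.
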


\begin{remark}
{\rm 1) Given a symmetric special multiserial algebra $\Lambda = KQ_{\Lambda}/ I_{\Lambda}$, two distinct admissible cuts of $Q_{\Lambda}$ yield, in general, 
non-isomorphic, non derived equivalent cut algebras $A$ and $A'$. We note that $A$ and $A'$ have the same number of simple 
modules and $\dim_K A = \dim_K A'$. But there are examples where ${\rm gldim} A < \infty $ and ${\rm gldim} A' = \infty $.

2) If $\Lambda=KQ_{\Lambda}/I_{\Lambda}$ is of finite (resp. tame) representation type then any cut algebra associated to a cut of $Q_{\Lambda}$ is of finite (resp. tame) 
representation type.  To see this, suppose that $A$ is the cut algebra of $\Lambda$ associated to an admissible
cut.  Then $\Lambda$ is isomorphic to $T(A)$ and there is a full
faithful embedding of the category of finitely generated $A$-modules into the category
of finitely generated $\Lambda$-modules.
}
\end{remark}

Let $\Lambda=KQ_{\Lambda}/I_{\Lambda}$ be a symmetric special multiserial algebra and $(\cS,\mu)$ be a defining pair 	in $Q_{\Lambda}$
so that $\Lambda$ is defined by $(\cS,\mu)$.  If $\mu$ is identically equal to 1, we say that
\emph{$\Lambda$ has multiplicity function identically equal to 1}. Note that if one views $\Lambda$ as a Brauer configuration algebra with multiplicity function $\nu$, this corresponds to  $\nu$ being identically equal to one. 

\begin{corollary} 
Every symmetric special multiserial algebra with multiplicity function identically equal to one in its defining pair  is a trivial extension of an almost gentle algebra. 
\newline
 Equivalently, we have that every Brauer configuration algebra with multiplicity function identically equal to one is the trivial extension of an almost gentle algebra. 
\end{corollary}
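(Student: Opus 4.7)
The plan is to assemble the corollary directly from Theorems~\ref{CutAlmostGentle} and~\ref{ext:trivial extension}; both theorems have already been proved in the previous section, so what remains is essentially to check that an admissible cut exists and then chain the two results together. Starting from a symmetric special multiserial algebra $\Lambda = KQ_\Lambda / I_\Lambda$ defined by the pair $(\cS, \mu)$ with $\mu \equiv 1$, the first step is to produce an admissible cut $D$ of $Q_\Lambda$. Let $\{C_1,\dots,C_t\}$ be a set of equivalence class representatives for the cyclic permutation relation on $\cS$ (finitely many, since $\Lambda$ is finite dimensional and hence $Q_\Lambda$ is finite). Each $C_i$ is a simple cycle, so it contains at least one arrow; choose one such arrow $\beta_i$ from each $C_i$ and set $D = \{\beta_1, \dots, \beta_t\}$. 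This is an admissible cut by definition.

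Next, I would form the cut algebra and apply the two preceding theorems in sequence. Let $Q$ be the subquiver of $Q_\Lambda$ obtained by deleting the arrows of $D$, and let $A$ be the cut algebra associated to $D$. Theorem~\ref{CutAlmostGentle} then gives $A \cong KQ/(I_\Lambda \cap KQ)$ and, crucially, asserts that $A$ is almost gentle. Since the hypothesis $\mu \equiv 1$ is in force, Theorem~\ref{ext:trivial extension} applies directly and yields a $K$-algebra isomorphism $T(A) \cong \Lambda$. This proves the first statement of the corollary.

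The second, equivalent, formulation in the language of Brauer configuration algebras follows at once from the result of \cite{GS2} recalled in Section~\ref{symm spec assoc to A}, namely that the class of symmetric special multiserial algebras coincides with the class of Brauer configuration algebras, and that under this identification the multiplicity function $\mu$ of the defining pair matches the Brauer multiplicity function $\nu$. Hence $\mu \equiv 1$ translates to $\nu \equiv 1$ verbatim. There is no genuine obstacle in the argument — the entire content of the corollary is packaged in the earlier theorems; the only small point that needs an explicit mention is the existence of an admissible cut, which amounts to the trivial remark that a simple cycle has at least one arrow and that there are only finitely many equivalence classes to make a choice over.
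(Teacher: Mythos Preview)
Your proposal is correct and matches the paper's intended argument: the paper states the corollary without proof precisely because it follows immediately from Theorems~\ref{CutAlmostGentle} and~\ref{ext:trivial extension} once an admissible cut is chosen, and the Brauer configuration reformulation is the identification from \cite{GS2} already recalled in Section~\ref{symm spec assoc to A}. The only point you spell out that the paper leaves tacit is the existence of an admissible cut, which is indeed the trivial observation you describe.
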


\section{The Brauer configuration algebra associated to an almost gentle algebra.}\label{sec-bca}
We have seen that to every almost gentle algebra $A=K\cQ/I$, the trivial extension algebra $T(A)$  
is a symmetric special multiserial algebra.   In \cite{GS2} we saw that a symmetric
special multiserial algebra is a Brauer configuration algebra.  In this section we show how to
construct the Brauer configuration of the Brauer configuration algebra $T(A)$ from an
almost gentle algebra $A$.

Given an almost gentle algebra $A$ in Section \ref{symm spec
assoc to A}  we saw that there is
a  defining pair $(\cS,\mu)$ obtained
from $A$ and the algebra associated to $(\cS,\mu)$ is
isomorphic to $T(A)$.  
In this section, we show, more generally, how to construct a Brauer configuration from a defining
pair $(\cT,\mu)$ so that the associated Brauer configuration algebra is isomorphic to
the algebra associated to $(\cT,\mu)$.

Let $(\cT,\mu)$ be a defining pair for the quiver $\cQ$.  There is an equivalence relation on $\cT$ given
by two cycles in $\cT$ are equivalent if one is a cyclic permutation of the other.
Suppose there are $m$ equivalence classes of elements of $\cT$ 
and let $c_1,\dots,c_m$ be a full set representatives of the equivalence classes.

Recall from \cite{GS1} that a \emph{Brauer configuration} is a 4-tuple, $\Gamma=(\Gamma_0,
\Gamma_1,\nu,\mathfrak o)$, where $\Gamma_0$ is a set of vertices, $\Gamma_1$ is a
set of polygons which are multisets of vertices, $\nu\colon \Gamma_0\to\mathbb Z_{\ge 0}$, and
$\mathfrak o$ is an orientation. 
We begin by setting $\Gamma_0=\{\alpha_1,\cdots, \alpha_m\}$ where $m$
is the number of equivalence classes of elements  of  $\cT$.  If $\cQ_0=\{v_1,\dots,v_n\}$ then
$\Gamma_1=\{V_1,\dots, V_n\}$  where $\alpha_j$ occurs $k$-times in the multiset $V_i$ if
$v_i$ occurs as a vertex $k$-times in the cycle $c_j$.  The function  $\nu$ is defined
by $\nu(\alpha_i)=\mu(c_i)$.  Finally, the orientation at vertex $\alpha_i$ is
$V_{i_1}<V_{i_2}<
\cdots <V_{i_{\ell(c_i)}}(<V_{i_1})$ if the sequence of vertices in the cycle $c_i$ is
$v_{i_1},v_{i_2},\dots, v_{\ell(c_i)},v_{i_1}$.

It is straightforward to check that the Brauer configuration algebra associated to the
Brauer graph $(\Gamma_0,\Gamma_1,\nu, \mathfrak o)$ defined above is isomorphic to the algebra associated to the defining pair $(\cT,\mu)$.

\section{The oriented hypergraph of an almost gentle algebra}

This section builds on the observation that every Brauer configuration $\Gamma$ is an oriented hypergraph $\mathcal H$ with a vertex decoration where the decoration on $\cH$ corresponds to the multiplicity function on $\Gamma$ and where the orientation on $\mathcal H$ is induced by the orientation on $\Gamma$.  Given an almost gentle algebra, we will give an alternative direct construction of its associated oriented hypergraph (i.e. without passing to the trivial extension). 
This construction gives, for example, an easy criterium to determine whether two almost gentle algebras have isomorphic trivial extensions. 

A {\it hypergraph} is a generalisation of a graph in which an edge can contain more than two vertices. That is a hypergraph $\cH$ is a tuple $(\cH_0, \cH_1)$ where $\cH_0$ is a finite set of vertices and $\cH_1$ is a finite set of hyperedges given by multisets of elements of $\cH_0$ with the convention that each multiset contains at least two elements. A {\it  hypergraph with orientation} is a hypergraph $\cH = (\cH_0, \cH_1)$ together with an orientation $\sigma$ such that for every vertex $x \in \cH_0$, the set of hyperedges containing $x$ are cyclically ordered (counting
repeats). 

\begin{remark} {\rm In the context of Brauer configurations, we also could adopt the convention of allowing hyperedges with one element. These would correspond to the truncated edges of the Brauer configuration. }
\end{remark}

Let $A = KQ/I$ be an almost gentle algebra. Recall from Section 2 that  $\cM$ is the set of maximal paths in $KQ/I$.

Let $V_0$ be the subset of $Q_0$ containing vertices $v$ such that one of the following holds: 
\begin{enumerate}
\item  $v$ is the source of exactly one arrow and there is no arrow ending at $v$,
\item $v$ is the target of exactly one arrow and there is no arrow starting at $v$, 
\item $v$ is the target of exactly one arrow $a$ and the source of exactly one arrow $b$ and $ab \notin I$. 
\end{enumerate}
Set $\overline{\cM} = \cM \cup \{e_v \vert v \in V_0\}$. We say that a vertex $v \in Q_0$ {\it  lies in} $\cM$, if there exists $p \in \overline{\cM}$ with $p = q e_v r$ where $q,r$ are possibly trivial paths in $Q$. 

The following follows directly from the definition of $\overline{\cM}$. It follows from Lemma~\ref{unique maximal path} that

\begin{lemma} 
Every vertex in $Q_0$ lies in at least two elements of $\overline{\cM}$. 
 \end{lemma}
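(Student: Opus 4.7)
The plan is to reduce this lemma to Lemma~\ref{unique maximal path} by first observing that the set $V_0$ is nothing other than the set of vertices of $Q$ that lie in a unique maximal path. Indeed, condition (1) in the definition of $V_0$ is precisely condition (ii) of Lemma~\ref{unique maximal path} (a source with a unique outgoing arrow), condition (2) is condition (i) (a sink with a unique incoming arrow), and condition (3) is exactly condition (iii). Thus by that lemma, $v \in V_0$ if and only if $v$ lies in exactly one element of $\cM$.

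With this reformulation in hand, the proof splits into two cases. If $v \in V_0$, then there is a unique maximal path $m \in \cM$ containing $v$, and moreover $e_v \in \overline{\cM}$ by construction. Writing $e_v = e_v \cdot e_v \cdot e_v$ shows that $v$ lies in $e_v$ in the sense of the definition, so $v$ lies in the two distinct elements $m$ and $e_v$ of $\overline{\cM}$. If instead $v \notin V_0$, then by the previous lemma on almost gentle algebras every arrow $a \in Q_1$ lies in some (unique) maximal path; since the algebra $A$ is indecomposable (so $v$ is incident to at least one arrow), $v$ lies in at least one maximal path. But, by the contrapositive of Lemma~\ref{unique maximal path} together with the identification $V_0 = \{v : v\text{ lies in a unique maximal path}\}$, $v$ cannot lie in exactly one maximal path. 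Hence $v$ lies in at least two elements of $\cM \subseteq \overline{\cM}$.

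There is no real obstacle here; the work is essentially bookkeeping: matching the three bullet points defining $V_0$ with the three cases of Lemma~\ref{unique maximal path}, and then using indecomposability of $A$ to rule out isolated vertices, so that the only remaining way for $v \notin V_0$ to fail the "unique maximal path" condition is by lying in at least two maximal paths.
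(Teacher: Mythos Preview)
Your argument is correct and is exactly the unpacking the paper intends: the paper's entire proof is the one-line remark that the lemma follows from Lemma~\ref{unique maximal path}, and you have spelled out precisely how---identifying $V_0$ with the set of vertices lying in a unique maximal path via the three matching conditions, and then splitting into the two obvious cases. The only edge case neither you nor the paper addresses is $A=K$ (a single vertex, no arrows), where the lemma is vacuously irrelevant to the rest of the section; your use of indecomposability to guarantee an incident arrow covers every other situation.
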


\emph{ {\bf  Construction of the  hypergraph $\cH_A$ with orientation of $A$:}} Let $A =  KQ/I$ be an almost gentle algebra.  Define  $\cH_A = (\cH_0, \cH_1)$ as follows. 
\begin{itemize} 
\item The vertices $\cH_0$ are in correspondence with the elements in $\overline{M}$. 
\item The hyperedges in $\cH_1$  correspond to the vertices in $Q_0$; namely, the hyperedge corresponding to a vertex $v \in Q_0$ is given by all elements $p \in \overline{M}$ such that $v$ lies in $p$. 
\item The orientation is induced by the maximal paths in $M$: Let $x$ be a vertex in $\cH_0$ and let $V_1, V_2, \ldots, V_n$ be the hyperedges corresponding respectively to the vertices $v_1, v_2, \ldots, v_n$  in $Q_0$ such that $v_1, v_2, \ldots, v_n$ lie in (the maximal path) $p$ corresponding to $x$. Suppose, without loss of generality, that $p= e_{v_1} a_1 e_{v_2}  a_2 e_{v_3} \cdots a_{n-1} e_{v_{n}}$ with $a_i \in Q_1$ then the cyclic ordering at $x$ is given by $V_1 < V_2 < \cdots < V_n < V_1$. 

\end{itemize}

Note that if $\Gamma$ is the Brauer configuration corresponding to $\cH$ then the multiplicity function of $\Gamma$ is identically equal to one and by our results 
the associated Brauer configuration algebra $\Lambda_\Gamma$ is isomorphic to the the trivial extension $T(A)$.

\begin{remark}{\rm 
1) If $A$ is gentle then the construction of the oriented hypergraph gives exactly the ribbon graph constructed in \cite{S}. We note that this is the general construction underlying the graphs in \cite{Parsons-Simoes, Garver}.

2) In the case of a gentle algebra associated to a surface triangulation (resp. angulation of a surface), the ribbon graph associated to the gentle algebra $A$ gives rise to the underlying surface and its triangulation (resp. angulation).
}\end{remark}

The hypergraph of an almost gentle algebra $A$, uniquely determines a Brauer configuration which uniquely determines a Brauer configuration algebra. It follows from Sections~\ref{sec:trivial extension}  and~\ref{sec-bca} that  this Brauer configuration algebra is isomorphic to the trivial extension of $A$.  Hence, we see immediately that

\begin{theorem}
Two almost gentle algebras $A$ and $B$ have the same associated hypergraph with orientation if and only if $T(A) \simeq T(B)$. 
\end{theorem}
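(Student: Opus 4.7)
The plan is to identify the hypergraph with orientation $\cH_A$ with the Brauer configuration of $T(A)$ constructed in Section~\ref{sec-bca}, and then translate the biconditional into the correspondence between Brauer configurations with multiplicity function identically one and their Brauer configuration algebras.

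First I would verify explicitly that the hypergraph $\cH_A$ from Section 7 coincides, as an oriented combinatorial object, with the Brauer configuration $\Gamma_A$ obtained by applying the Section~\ref{sec-bca} construction to the defining pair $(\cS,\mu)$ of Section~\ref{symm spec assoc to A}. Under this comparison the equivalence classes of cycles in $\cS$ are naturally indexed by $\overline{\cM}$, so $\Gamma_0$ matches $\cH_0$; the polygons $\Gamma_1$ are indexed by $Q_0$, matching $\cH_1$, with the multiset structure recording which maximal paths pass through each quiver vertex; and the cyclic order at each $p\in\overline{\cM}$ is precisely the sequence of quiver vertices appearing in $p$. Since $\mu\equiv 1$, no further decoration is present, so $\cH_A$ and $\Gamma_A$ carry the same data.

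For the forward direction, an isomorphism $\cH_A\cong \cH_B$ yields $\Gamma_A\cong \Gamma_B$, and since the Brauer configuration algebra construction of \cite{GS1} is determined purely by this combinatorial data, the associated algebras $\Lambda_{\Gamma_A}$ and $\Lambda_{\Gamma_B}$ are isomorphic. Combined with Theorem~\ref{A* iso to T(A)} and Section~\ref{sec-bca} this gives $T(A)\cong \Lambda_{\Gamma_A}\cong \Lambda_{\Gamma_B}\cong T(B)$.

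For the reverse direction, an algebra isomorphism $T(A)\cong T(B)$ must be translated into an isomorphism $\Gamma_A\cong \Gamma_B$, and this is the main obstacle. The plan is to reconstruct the Brauer configuration of $T(A)$ from internal invariants of the algebra: the polygons correspond to the isomorphism classes of indecomposable projectives; the vertices correspond to the equivalence classes of special cycles, which can be read off from the uniserial radical filtrations of the projectives (via Proposition~\ref{radical-structure} applied to $T(A)$) together with its socle structure as a symmetric algebra; and the cyclic order at each Brauer vertex is dictated by the multiplication of arrows in $T(A)$. Granting this reconstruction, $T(A)\cong T(B)$ forces $\Gamma_A\cong \Gamma_B$, and the first step then gives $\cH_A\cong \cH_B$.
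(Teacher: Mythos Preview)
Your approach is essentially the same as the paper's, which treats the theorem as an immediate corollary of the preceding paragraph: the hypergraph $\cH_A$ is the Brauer configuration of $T(A)$ (with multiplicity function identically one), and the correspondence between Brauer configurations and Brauer configuration algebras from \cite{GS1, GS2} then gives both directions at once. The paper offers no further argument.

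A few technical remarks on your write-up. First, in the identification of $\cH_A$ with the Brauer configuration produced in Section~\ref{sec-bca} from the defining pair $(\cS,\mu)$, the vertex sets do not match on the nose: the cycles in $\cS$ are indexed by $\cM$, not by $\overline{\cM}$. The extra elements $e_v$ for $v\in V_0$ in $\overline{\cM}$ correspond to \emph{truncated} vertices of the Brauer configuration, which are invisible at the level of the algebra; you should say a word about why this does not affect either direction of the biconditional. Second, your appeal to Proposition~\ref{radical-structure} ``applied to $T(A)$'' is misplaced: that proposition is stated and proved for almost gentle algebras, and $T(A)$ is not almost gentle. The structural input you actually want for the reverse direction is the decomposition of the radical of a symmetric special multiserial algebra into uniserial summands, which comes from \cite{GS2}, together with the fact (implicit in \cite{GS1}) that a Brauer configuration algebra determines its Brauer configuration up to isomorphism. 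The paper simply takes this last fact for granted; your sketch of how to recover the configuration from the uniserial structure of the projectives is a reasonable way to make it explicit, but cite the correct source.
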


\begin{Examples}{\rm 
1) Let $A_1=KQ_1 / I_1$ be the almost gentle algebra given by

\begin{tikzcd} Q_1: & 
   v_1 \ar[%
    ,loop 
    ,out=180 
    ,in=120 
    ,distance=2.5em 
   , "a_1" ] \arrow[r,bend left,"a_2"] &  \arrow[l, bend left, shift left=1.5ex, "b"] \arrow[l,bend left,"a_3"'] v_2  \arrow[r,bend left,"c"] & v_3  \\
\end{tikzcd}
and $I = \langle a_2 b, a_2 c, a_3 a_1, b a_1, b a_2 \rangle$. Then 
$$\overline{M} = \{ a_1 a_2 a_3, b, c\} \cup  \{e_{v_3}\}$$ 
Therefore $\cH_A = (\cH_0, \cH_1)$ is such that $\cH_0 = \{1,2,3,4 \}$ where 
\begin{flalign*}
1 &\mbox{ corresponds to } a_1 a_2 a_3 \\
2 &\mbox{ corresponds to } b \\
3 &\mbox{ corresponds to } c \\
4 &\mbox{ corresponds to } e_{v_3} 
\end{flalign*} 
and $\cH_1 = \{ V_1, V_2, V_3 \}$ where
\begin{align*}
V_1 & = \{ 1, 1, 2\} \\
V_2 & = \{ 1, 2, 3\} \\
V_3 & = \{ 3, 4\} \\
\end{align*}
Finally the orientation is induced by the order of the vertices in the maximal paths:  $e_{v_1} a_1 e_{v_1} a_2 e_{v_2} a_3 e_{v_1}$, $e_{v_2} b e_{v_1}, e_{v_2} c e_{v_3}$.  So the cyclic ordering of the polygons at vertex 1 is given by $V_1 < V_1 < V_2 < V_1 < V_1$, at vertex 2 it is $V_2 < V_1 <V_2$, at vertex 3 it is $V_2 < V_3 <V_2$ and at vertex 4 it is $V_3$.   

Let 
\begin{tikzcd} Q_2: & 
   v_1 \ar[%
    ,loop 
    ,out=180 
    ,in=120 
    ,distance=2.5em 
   , "a_2" ]  
   \ar[%
    ,loop 
    ,out=250 
    ,in=190 
    ,distance=2.5em 
   , "a_1" ]  
   \arrow[r,bend left,"a_3"] \arrow[r,bend left,  shift right=1.5ex, "b"'] &   v_2   & \arrow[l,bend left,"c"] v_3  \\
\end{tikzcd} and $I_2 = \langle a_1 a_3, a_1 b, a_2 a_1, a_2 b \rangle$. 
Note that the algebra $A_2 = KQ_2 /I_2$ has the same associated hypergraph as $A_1$, that is $\cH_{A_2} = \cH_{A_1}$. Therefore by Theorem 4.3 and the construction in Section 6, the algebras $T(A_1)$ and $T(A_3)$ are isomorphic. }\end{Examples}

\bibliographystyle{plain}

\end{document}